\newlength{\minipagewidth}
\newtheorem{theo}{Theorem}[section]
\newtheorem{algo}[theo]{Algorithm}
\newtheorem{rem}[theo]{Remark}
\newtheorem{propo}[theo]{Proposition}
\newtheorem{lemme}[theo]{Lemma}
\newtheorem{defi}[theo]{Definition}
\newtheorem{assumption}[theo]{Assumption}
\newcommand{\E}{\mathbb{E}}
\newcommand{\R}{\mathbb{R}}
\newcommand{\PP}{\mathbb{P}}
\newcommand{\N}{\mathbb{N}}
\providecommand{\keywords}[1]{\textit{Keywords:} #1}
\providecommand{\subjclass}[1]{\textit{MSC:} #1}
\begin{document}

\title{Large deviations principle for the Adaptive Multilevel Splitting Algorithm in an idealized setting }

\author{Charles-Edouard Br\'ehier \footnote{Institut de Math\'ematiques, Universit\'e de Neuch\^atel, Rue Emile Argand 11, CH-2000 Neuch\^atel. e-mail: charles-edouard.brehier@unine.ch}}


\date{}

\maketitle

\begin{abstract}
The Adaptive Multilevel Splitting (AMS) algorithm is a powerful and versatile method for the simulation of rare events. It is based on an interacting (via a mutation-selection procedure) system of replicas, and depends on two integer parameters: $n\in \N^*$ the size of the system and the number $k\in\left\{1,\ldots,n-1\right\}$ of the replicas that are eliminated and resampled at each iteration.


In an idealized setting, we analyze the performance of this algorithm in terms of a Large Deviations Principle when $n$ goes to infinity, for the estimation of the (small) probability $\PP(X>a)$ where $a$ is a given threshold and $X$ is real-valued random variable. The proof uses the technique introduced in \cite{BLR}: in order to study the log-Laplace transform, we rely on an auxiliary functional equation.

Such Large Deviations Principle results are potentially useful to study the algorithm beyond the idealized setting, in particular to compute rare transitions probabilities for complex high-dimensional stochastic processes.

\end{abstract}

\keywords{Monte-Carlo simulation, rare events, multilevel splitting, large deviations}

\subjclass{65C05; 65C35; 62G30; 60F10}

\section{Introduction}

In many problems from engineering, biology, chemistry, physics or finance, rare events are often critical and have a huge impact on the phenomena which are studied. From a general mathematical perspective, we may consider the following situation: let $(X_t)_{t\in \mathbb{T}}$, where $\mathbb{T}=\N$ or $\R$, be a (discrete or continuous in time) stochastic process, taking values in $\R^d$. Assume that $A,B\subset \R^d$ are two \emph{metastable regions}: starting from a neighborhood of $A$ (resp. of $B$), the probability that the process reaches $B$ (resp. $A$) before hitting $A$ (resp. $B$) is very small (typically, less than $10^{-10}$). As a consequence, a direct numerical Monte-Carlo with an ensemble of size $N$ does not provide significant results when $N$ is reasonably large (typically, less than $10^{10}$) in real-life applications.

Even if theoretical asymptotic expansions on quantities of interest are available - such as the Kramers-Arrhenius law given for instance by the Freidlin-Wentzell Large Deviations Theory or Potential Theory for the exit problem of a diffusion process in the small noise regime - in practice their explicit computation is not possible (for instance when the dimension is large) and numerical simulations are unavoidable.

It is thus essential to propose efficient and general methods, and to rigorously study their consistency and efficiency properties. Two main families of methods have been introduced in the 1950's and studied extensively since then, in order to improve the Monte-Carlo simulation algorithms, in particular for rare events: importance sampling and importance splitting (see for instance \cite{AG}, \cite{RubinoTuffin2009} for general reviews of these methods and \cite{KahnHarris1951} for the historical introduction of importance splitting). The main difference between these two methods is the following: the first one is \emph{intrusive}, meaning that the dynamics of the stochastic process (more generally, the distribution of the random variable of interest) is modified so that the probability that the event of interest increases and in a Monte-Carlo simulation it is realized more often, while the second is not intrusive and can thus be used more directly for complex problems. Instead, for importance splitting strategies, the state space is decomposed as a nested sequence of regions which are visited sequentially and more easily by an interacting system of replicas.

In this paper, we focus on an importance splitting strategy which is known as the Multilevel Splitting approach and describe it in the following setting. Let $h:\R^d\rightarrow \R$ be a given function and assume we want to estimate the probability $p=\PP(X> a)$ that a real-valued random variable $X=h(Y)$ (where $Y$ is a $\R^d$-valued random variable) belongs to $(a,+\infty)$ for a given threshold $a\in \R$. This situation is not restrictive for many applications; indeed, we may take $X=\mathds{1}_{\tau_B<\tau_A}$ and any $a\in(0,1)$ in the situation described above, where $\tau_A$ and $\tau_B$ are the hitting times of $A$ and $B$ by the process $X$. A key assumption on the distribution of $X$ is the following: we assume that the cumulative distribution function $F$ of $X$ - {\it i.e.} $F(x)=\PP(X\leq x)$ for any $x\in \R$ - is continuous; for convenience, we also assume that $F(0)=0$ - {\it i.e.} $X>0$ almost surely.

The multilevel splitting approach (see \cite{KahnHarris1951}, \cite{GlassermanHeidelbergerShahabuddinZajic1999}, \cite{CerouDel-MoralFuronGuyader2012} for instance) is based on the following decomposition of $p$ as a telescopic product of conditional probabilities:
\begin{equation}\label{eq:decomp_p_intro}
p=\PP(X>a)=\prod_{i=1}^{N}\PP(X>a_i \big| X>a_{i-1}),
\end{equation}
where $a_0=0<a_1<\ldots<a_N=a$ is a sequence of non-decreasing {\it i.e. } levels. In other words, the realization of the event $\left\{X>a\right\}$ is split into the realizations of the $N$ events $\left\{X>a_i\right\}$ conditional on $\left\{X>a_{i-1}\right\}$; each event has a larger probability than the initial one and is thus much easier to realize. Then each of the conditional probabilities is estimated separately, for instance with independent Monte-Carlo simulations, or using a Sequential Monte-Carlo technique with a splitting of successful trajectories. This approach have been studied with different viewpoints and variants under different names in the literature -  nested sampling \cite{Skilling2006}, \cite{Skilling2007}, subset simulation \cite{AB}, RESTART, \cite{Villen-Altamirano1991}, \cite{Villen-Altamirano1994}. 

For future reference, we introduce the following (unbiased) estimator of $p$ given by the multilevel splitting approach with $N$ levels and $n$ replicas:
\begin{equation}\label{eq:estim_MS_intro}
\hat{p}_n^N=\prod_{i=1}^{N}\frac{1}{n}\sum_{m=1}^{n}\mathds{1}_{X_{m}^{(i)}>a_{i}},
\end{equation}
where the random variables $(X_{m}^{(i)})_{1\leq m\leq N, 1\leq i\leq N}$ are independent and the distribution of $X_{m}^{(i)}$ is $\mathcal{L}(X | X>a_{i-1})$. Thus $\hat{p}_n^N$ is a product of $N$ independent Monte-Carlo estimators of the conditional probabilities in \eqref{eq:decomp_p_intro}.

The efficiency of the algorithm depends crucially on the choice of the sequence of levels $(a_i)_{1\leq i\leq N}$: for a fixed size $N$, the variance of the estimator is minimized when the conditional probabilities are equal (to $p^{1/N}$); moreover the associated variance converges (to $-p^2\log(p)/n$) when $N$ goes to infinity - see for instance \cite{CerouDel-MoralFuronGuyader2012} for more details.

To get a more flexible algorithms, a possible approach is to compute levels adaptively, as proposed in \cite{CR}, and studied extensively in the last years, see for instance \cite{BLR}, \cite{BGT}, \cite{CerouGuyader2014}, \cite{G..}, \cite{Simonnet2014}, \cite{Walter2014}. It is essential to check that these adaptive versions still give reliable results, and to prove they do it efficiently.

More precisely, we consider the Algorithm \ref{algo:AMS} defined below, which depends on two parameters $n$ and $k$, with the condition $1\leq k\leq n-1$. We let evolve a system of $n$ interacting replicas, and at each iteration a selection-mutation procedure leads to resample the system as follows: we compute the $k$-th order statistic $Z$ - which corresponds to the so-called level at the given iteration - of the system and eliminate the $k$ replicas with values less than $Z$; they are then resampled using the conditional distribution $\mathcal{L}(X | X>Z)$ of $X$ conditional on $\left\{X>Z\right\}$. The algorithm stops when $Z\geq a$, and we define an estimator $\hat{p}^{n,k}$ depending on the number of iterations and of the terminal configuration of the system of replicas, see \eqref{eq:estimator}. In practice, we require to be able to sample according to the conditional distribution $\mathcal{L}(X | X>z)$ for any value of $z$: this is part of the idealized setting assumption; even if it is rarely satisfied in real-life applications, the study of the algorithm in that setting is already challenging and yields very interesting results, that can usually be generalized beyond this simplified case at the price of a much more intricate analysis.

Let us recall a few fundamental results. In \cite{G..} (see also \cite{Simonnet2014}, \cite{Walter2014}), it was proved that for any value of $n\geq 2$ then $\hat{p}^{n,1}$ is an unbiased estimator of $p$ - meaning that $\E[\hat{p}^{n,1}]=p$. This result was extend to general $1\leq k\leq n-1$ in \cite{BLR}. Efficiency properties have been studied with the proof of Central Limit Theorems in two different kinds of regimes: either $k$ is fixed and $n\rightarrow +\infty$ (see \cite{BGT} as well as \cite{G..} and \cite{Simonnet2014} when $k=1$), or both $k$ and $n$ go to infinity, in such a way that $k/n$ converges to $\alpha\in(0,1)$ - which gives a fixed proportion of resampled replicas at each iteration, see \cite{CR} and the more recent work \cite{CerouGuyader2014} in a very general framework.

The efficiency is ensured by the observation that the asymptotic variance is the same for both the adaptive and the non-adaptive versions. Moreover, it is much smaller than when using a crude Monte-Carlo estimator, {\it i.e.} the empirical average
\begin{equation}\label{eq:cMC_intro}
\overline{p}_n=\frac{1}{n}\sum_{m=1}^{n}\mathds{1}_{X_m>a},
\end{equation}
where the random variables $(X_m)_{1\leq m\leq n}$ are independent and identically distributed, with distribution $\mathcal{L}(X)$.

In this paper, we prove a similar result with a different criterion, which seems to be original compared with existing literature: we prove a Large Deviations Principle principle for the distribution of the estimator $\hat{p}^{n,k}$ given by the adaptive algorithm when $k$ is fixed and $n\rightarrow +\infty$. Our main result is Theorem \ref{th:LDP}, which in particular yields for any given $\epsilon>0$
$$
\frac{1}{n}\log\Bigl(\PP\bigl( |\hat{p}^{n,k}-p| \ge \epsilon \bigr)\Bigr)\underset{n\rightarrow +\infty}\rightarrow -\min\bigl(I(p+\epsilon),I(p-\epsilon)\bigr)<0.
$$
The rate function $I$ - see \eqref{eq:rate_I} - obtained in Theorem \ref{th:LDP} does not depend on $k$. We then compare this rate function with $\mathcal{I}$ - see \eqref{eq:def_Ical} - the rate function obtained for a crude Monte-Carlo estimator $\overline{p}_n$ given by \eqref{eq:cMC_intro} (thanks to Cramer Theorem, see \cite{DZ}) and show that for any $y\in (0,1)\setminus p$ we have $I(y)>\mathcal{I}(y)$ - we have $\mathcal{I}(p)=I(p)=0$, and $\mathcal{I}(y)=I(y)=+\infty$ if $y\notin (0,1)$ - and thus
$$\frac{\PP(\hat{p}^{n,k}-p>\epsilon)}{\PP(\overline{p}_n-p>\epsilon)}\underset{n\rightarrow +\infty}\rightarrow 0.$$
In other words, for large $n$, the probability that $\hat{p}^{n,k}$ deviates from $p$ from above (and similarly from below) with threshold $\epsilon>0$ decreases exponentially fast, at a faster rate than for $\overline{p}_n$.

Moreover, we prove that the non-adaptive, fixed-levels estimator $\hat{p}_n^N$ satisfies a Large Deviations Principle when $n\rightarrow +\infty$ with rate function $\mathcal{I}_N$ for a fixed number of levels $N$ and when the levels are chosen in an optimal way, namely such that $\PP(X>a_{i} | X>a_{i-1})=p^{1/N}$ does not depend on $i$. We then show that $\lim_{N\rightarrow +\infty}\mathcal{I}_N(y)\leq I(y)$ for any $y\in\R$: this inequality is sufficient to prove that asymptotically the adaptive algorithm performs (at least) as well as the non-adaptive version in this setting, in terms of Large Deviations.

The proof of Theorem \ref{th:LDP} relies on the technique introduced in \cite{BLR}. First, we restrict the study of the properties of the algorithm to the case when $X$ is exponentially distributed with parameter $1$ (this key remark was introduced first in \cite{G..} and used also in \cite{Simonnet2014}, \cite{Walter2014}). Instead of working on $\hat{p}^{n,k}$ directly, we focus on its logarithm $\log(\hat{p}^{n,k})$, and prove that when considering the algorithm as depending on an initial condition $x$, the Laplace transform of the latter is solution of a functional (integral) equation (with respect to the $x$ variable) - thanks to a decomposition of the realizations of the algorithm according to the value of the first level. To study the equation in the asymptotic regime considered in this paper, we then derive a linear ordinary differential equation of order $k$ and perform an asymptotic expansion. Note that we do not give all details for the derivation of the differential equations and the basic properties of its coefficients; for some points we refer the reader to \cite{BLR} where all the arguments are proved with details and here we mainly focus on the proof of the new asymptotic results as well as on the interpretation of the Large Deviations Principle for our purpose.

It seems that studying the performance of multilevel splitting algorithms via Large Deviations Principle is an original approach, which can complement the more classical studies which are all based on Central Limit Theorems. In this paper, we proved a result in a specific regime ($k$ is fixed, $n\rightarrow +\infty$) in the idealized setting. To go further, it would be interesting to look at other regimes ($k,n\rightarrow +\infty$ with $k/n\rightarrow \alpha\in(0,1)$) and to go beyond the idealized setting. This will be the subject of future investigation.

The paper is organized as follows. In Section \ref{sect:desc_AMS}, we introduce our main assumptions (Section \ref{sect:ass}), describe the Adaptive Multilevel Splitting algorithm (Section \ref{sect:algo}) and recall several of its fundamental properties used in the sequel of the article (Section \ref{sect:prop_AMS}). The main result of this paper is given in Section \ref{sect:LDP}: it is the Large Deviations Principle for the estimator of the probability given by the AMS estimator, see Theorem \ref{th:LDP}. An important auxiliary result is stated in Section \ref{sect:strategy}, and proofs are carried over in Section \ref{sect:proof} - some technical estimates being proved in Section \ref{sect:detailed_proof}. We compare the performance in terms of the Large Deviations Principle of the AMS algorithm with two other methods in Section \ref{sect:comp}: a crude Monte-Carlo method and a fixed-level splitting method. Finally, we give some concluding remarks and perspectives in Section \ref{sect:conclusion}.

\section{Description of the Adaptive Multilevel Splitting algorithm}\label{sect:desc_AMS}

\subsection{Assumptions}\label{sect:ass}

Let $X$ be some real random variable. For simplicity, we assume that $X>0$ almost surely.

We want to estimate the probability $p=\PP(X>a)$, where $a>0$ is some threshold. When $a$ goes to $+\infty$, $p$ goes to $0$ and we have to estimate the probability of a rare event.

We make a fundamental assumption on the distribution of $X$.
\begin{assumption}\label{ass:cdf:c0}
Let $F$ denote the cumulative distribution function of $X$: we assume that $F$ is continuous.
\end{assumption}

More generally, for both theoretical and practical purpose, we introduce for $0\leq x\leq a$ the conditional probability
\begin{equation}\label{eq:cond_P(x)}
P(x)=\PP(X>a |X>x);
\end{equation}
we also denote by $\mathcal{L}\bigl(X | X>x)$ the associated conditional distribution, and $F(\cdot; x)$ its cumulative distribution function: for any $y>x$ we have $F(y;x)=\frac{F(y)-F(x)}{1-F(x)}$ whenever $F(x)<1$.

We notice two important equalities: $P(a)=1$, and the estimated probability is $p=P(0)$; in fact, the distribution of $X$ is equal to $\mathcal{L}\bigl(X | X>0)$.

The {\emph idealized setting} refers to the following assumptions:
\begin{itemize}
\item Assumption \ref{ass:cdf:c0} is satisfied (\emph{theoretical condition});
\item it is possible to sample according to the conditional distribution $\mathcal{L}\bigl(X | X>x)$ for any $x\in[0,a)$ (\emph{practical condition}).
\end{itemize}

In view of a practical implementation of the algorithm, the second condition is probably the most restrictive. One may rely on some approximation of the conditional distribution $\mathcal{L}\bigl(X | X>x)$ thanks to a Metropolis-Hastings algorithm: in that case (see \cite{CerouGuyader2014} for instance), the analysis we develop here does not apply, but gives an interesting insight for the behavior in the case of a large number of steps in the Metropolis-Hastings auxiliary scheme (rigorously, we treat the case of an infinite number of steps).

\subsection{The algorithm}\label{sect:algo}

We now present the Adaptive Multilevel Splitting algorithm, under the assumptions of Section \ref{sect:ass} above.

The algorithm depends on two parameters:
\begin{itemize}
\item the number of replicas $n$;
\item the number $k\in\left\{1,\ldots,n-1\right\}$ of replicas that are resampled at each iteration.
\end{itemize}

The other necessary parameters are the initial condition $x$ and the stopping threshold $a$: the aim is to estimate the conditional probability $P(x)$ introduced in \eqref{eq:cond_P(x)}. For future reference, we denote by ${\rm AMS}(n,k;a,x)$ the algorithm.

The dependence with respect to $x$ allows us below to state fundamental functional equations on useful observables of the estimator computed at the end of the iterations of the algorithm, as a function of $x$. In practice, we are interested in the case $x=0$; in this situation, the algorithm is denoted by ${\rm AMS}(n,k;a)$.

Before we detail the algorithm, we introduce important notation. First, when we consider a random variable $X_{i}^{j}$, the subscript $i$ denotes the index in $\left\{1,\ldots,n\right\}$ of a replica, while the superscript $j$ denotes the iteration of the algorithm.

Moreover, we use the following notation for order statistics. Let $Y=(Y_1,\ldots,Y_n)$ be independent and identically distributed (i.i.d.) real valued random variables with continuous cumulative distribution function; then there exists almost surely a unique (random) permutation $\sigma$ of $\left\{1,\ldots,n\right\}$ such that $Y_{\sigma(1)}<\ldots<Y_{\sigma(n)}$. For any $k \in \{1, \ldots,n\}$, we then denote by $Y_{(k)}=Y_{\sigma(k)}$ the so-called $k$-th order statistic of the sample $Y$. Sometimes we need to specify the size of the sample of which we consider the order statistics: we then use the notation $Y_{(k,n)}$.

We are now in position to write the ${\rm AMS}(n,k;a,x)$ algorithm.
\begin{algo}[Adaptive Multilevel Splitting, ${\rm AMS}(n,k;a,x)$]\label{algo:AMS}
~

\noindent
{\bf Initialization:}
Set the initial level $Z^{0}=x$.

Sample $n$ i.i.d. realizations $X_{1}^{0},\ldots,X_{n}^{0}$, with distribution $\mathcal{L}(X | X>x)$.

Define $Z^{1}=X_{(k)}^{0}$, the $k$-th order statistics of the sample $X^{0}=(X_{1}^{0},\ldots,X_{n}^{0})$, and $\sigma^1$ the (a.s.) unique associated permutation: $X_{\sigma^1(1)}^{0}<\ldots<X_{\sigma^1(n)}^{0}$. 

Set $j=1$.

\noindent
{\bf Iterations (on $j\geq 1$):} While $Z^{j} <  a$:

\begin{itemize}
\item Conditional on $Z^{j}$, sample $k$ new independent random variables $(Y_1^j,\ldots,Y_k^j)$, according to the law $\mathcal{L}(X | X>Z^{j})$.

\item Set
$$
X_{i}^{j}=\begin{cases}Y_{(\sigma^j)^{-1}(i)}^{j} \quad \text{if } (\sigma^j)^{-1}(i)\leq k\\ X_{i}^{j-1} \quad \text{if } (\sigma^j)^{-1}(i)>k. \end{cases}
$$

In other words, we resample exactly $k$ out of the $n$ replicas, namely those with index $i$ such that $X_{i}^{j-1}\leq Z^{j}$, \textit{i.e.} such that $i\in\left\{\sigma^{j}(1),\ldots,\sigma^{j}(k)\right\}$ (which is equivalent to $(\sigma^j)^{-1}(i)\leq k$). They are resampled according the the conditional distribution $\mathcal{L}(X | X>Z^{j})$. The other replicas are not modified.


\item Define $Z^{j+1}=X_{(k)}^{j}$, the $k$-th order statistics of the sample $X^{j}=(X_{1}^{j},\ldots,X_{n}^{j})$, and $\sigma^{j+1}$ the (a.s.) unique associated permutation: $X_{\sigma^{j+1}(1)}^{j}<\ldots<X_{\sigma^{j+1}(n)}^{j}$. 

\item Finally increment $j\leftarrow j+1$.

\end{itemize}

\noindent
{\bf End of the algorithm:}
Define $J^{n,k}(x)=j-1$ as the (random) number of iterations. Notice that $J^{n,k}(x)$ is such that $Z^{J^{n,k}(x)} < a$ and $Z^{J^{n,k}(x)+1} \ge a$.
\end{algo}

Notice for instance that $J^{n,k}(x)=0$ if and only if $Z^{1}>a$: we mean that in this case the algorithm has required $0$ iteration, since the stopping condition at the beginning of the loop (on $j$) is satisfied without entering into the loop.

The estimator of the probability $P(x)$ is defined by
\begin{equation}\label{eq:estimator}
\hat{p}^{n,k}(x)=C^{n,k}(x)\left(1-\frac{k}{n}\right)^{J^{n,k}(x)},
\end{equation}
with
\begin{equation}\label{eq:corrector}
C^{n,k}(x)=\frac{1}{n}{\rm Card}\left\{i ;\, X_{i}^{J^{n,k}(x)} \ge a\right\}.
\end{equation}

The interpretation of the factor $C^{n,k}(x)$ is the following: it is the proportion of the replicas $X_{i}^{J^{n,k}(x)}$ which satisfy $X_{i}^{j}\ge a$: since $X_{(k)}^{J^{n,k}(x)}=Z^{J^{n,k}(x)+1}\ge a$, we have $C^{n,k}(x)\ge \frac{n-k+1}{n}$. Notice that $C^{n,1}(x)=1$.


When $x=0$, to simplify notations we set $\hat{p}^{n,k}=\hat{p}^{n,k}(0)$.

\subsection{Properties of the AMS Algorithm \ref{algo:AMS}}\label{sect:prop_AMS}

\subsubsection*{Well-posedness}

We first recall some important results on the well-posedness of the algorithm. For more detailed statements and complete proofs, see Section $3.2$ in \cite{BLR}, in particular Proposition $3.2$ there.

First, at each iteration $j$ of the algorithm, conditional on the level $Z^{j}$, the resampling produces a family of $n$ random variables $\bigl(X_{i}^{j}\bigr)_{1\leq i\leq n}$ which are independent and identically distributed, with distribution $\mathcal{L}(X | X>Z^{j})$. By Assumption \ref{ass:cdf:c0}, conditional on $Z^{j}$ the latter conditional distribution also admits a continuous cumulative distribution function $F(\cdot; Z^{j})$; as a consequence, almost surely the permutation $\sigma^{j+1}$ is unique, and the level $Z^{j+1}$ is well-defined.

Moreover, if we assume that $P(x)>0$, almost surely the algorithm stops after a finite number of steps, for any values of $k$ and $n$ such that $1\leq k\leq n-1$: the random variable $J^{n,k}(x)$ almost surely takes values in $\N$, and the estimator $\hat{p}^{n,k}(x)$ is well-defined and takes values in $(0,1]$.

\subsubsection*{Reduction to the exponential case}

We now state properties that are essential for our theoretical study of the algorithm below.

One of the main tools in \cite{BLR} and \cite{BGT}, which was also used in \cite{G..} in the case $k=1$, is the restriction to the case where the random variables are exponentially distributed. More precisely, assume that $P(x)>0$, and denote by $\mathcal{E}(1)$ the exponential distribution with mean $1$. Then in distribution the algorithm ${\rm AMS}(n,k;a)$ is equal to the algorithm ${\rm AMS}_{\rm expo}(n,k;-\log(p))$ in which we assume that the distribution is $\mathcal{E}(1)$; a similar result holds for ${\rm AMS}(n,k;a,x)$ when $x\in [0,a)$. In particular, the associated estimators are equally distributed. The main argument is the well-known equality of distribution $F(X)=U$ where $U$ is uniformly distributed on $(0,1)$.

In the sequel, we state in Section \ref{sect:LDP} our results in the general setting - {\it i.e.} for ${\rm AMS}(n,k;a)$, with the probability $p$ and the estimator $\hat{p}^{n,k}$ - but in the remaining of the paper we give proofs in the exponential case, namely for ${\rm AMS}_{\rm expo}(n,k;a_{\rm expo},x)$ with $a_{\rm expo}=-\log(p)$, and we omit the reference to the exponential case to simplify the notation. Whether we consider the general or the exponential case will be clear from the context.

\section{The Large Deviations Principle result for the AMS algorithm}\label{sect:LDP}

The main result of this article is the following Theorem \ref{th:LDP}, which states a Large Deviations Principle (in the sense of \cite{DZ}) for the distribution $\mu^{n,k}=\mathcal{L}\bigl(\hat{p}^{n,k}\bigr)$ of $\hat{p}^{n,k}$ for fixed probability $p>0$ and $k\in\N^*$, in the limit $n\rightarrow +\infty$.

\begin{theo}\label{th:LDP}
Assume that $p\in(0,1)$ and $k\in\N^*$ are fixed. Then the sequence $\bigl(\mu^{n,k}\bigr)_{n\in\N, n>k}$ of distributions of the estimator $\hat{p}^{n,k}$ of $p$ obtained by the ${\rm AMS}(n,k;a)$ algorithm satisfies a Large Deviations Principle with the rate function $I$ defined by
\begin{equation}\label{eq:rate_I}
I(y)=\begin{cases} +\infty \text{ if } y\notin (0,1)\\ \log(y)\log(\frac{\log(p)}{\log(y)})+\log(\frac{y}{p}) \text{ if } y\in (0,1). \end{cases}
\end{equation}
\end{theo}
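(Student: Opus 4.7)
\textit{Overview and reduction.} The plan is to compute, via the functional-equation / ODE technique of \cite{BLR}, a log-Laplace limit for the estimator, conclude by G\"artner--Ellis, and transfer the resulting LDP to $\hat p^{n,k}$ by a contraction argument. Via the reduction recalled in Section~\ref{sect:prop_AMS}, I would work in the exponential setting with $X\sim\mathcal{E}(1)$ and threshold $a=-\log p$. Writing $\hat p^{n,k}=C^{n,k}(1-k/n)^{J^{n,k}}$ and Taylor-expanding the logarithm, one checks the uniform bound $\log\hat p^{n,k}+\tfrac{k}{n}J^{n,k}=O(1/n)$, so the (speed-$n$) LDP for $\hat p^{n,k}$ is equivalent, via the continuous map $y=e^{-t}$ and the contraction principle, to an LDP for $T_n:=\tfrac{k}{n}J^{n,k}$. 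Under this change of variable, the claimed $I(y)$ corresponds to $J(t)=t\log(t/a)-t+a$ for $t>0$, the classical Poisson-type rate function.

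\textit{Functional equation and ODE.} To analyse $T_n$, I would study the parametrised Laplace transform
$$\phi^{n,k}(x,\mu):=\E\bigl[e^{\mu J^{n,k}(x)}\bigr],\qquad x\in[0,a],$$
and decompose on the first level $Z^1=x+E_k$, where $E_k$ denotes the $k$-th order statistic of $n$ i.i.d.\ $\mathcal{E}(1)$ random variables. After resampling, by the memoryless property and the properties of order statistics, the system is again i.i.d.\ $\mathcal{E}(1)$ above $Z^1$, so the tail of the algorithm has the law of ${\rm AMS}(n,k;a,Z^1)$. This yields the Volterra equation
$$\phi^{n,k}(x,\mu)=\PP(E_k\ge a-x)+e^{\mu}\int_0^{a-x}\phi^{n,k}(x+y,\mu)\,f_{E_k}(y)\,dy.$$
Since $E_k=\sum_{j=1}^k\xi_j/(n-j+1)$ has rational Laplace transform $\prod_{j=1}^k(n-j+1)/(n-j+1+\xi)$, a standard manipulation converts this into a linear constant-coefficient ODE of order~$k$ in $x$, with $k$ boundary conditions at $x=a$ (coming from $\phi^{n,k}(a,\mu)=1$ and successive derivatives of the Volterra equation at $s=a-x=0$).

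\textit{Asymptotic analysis.} The characteristic polynomial of the ODE has $k$ roots; one identifies a dominant root $r_\star^{(n)}(\mu)$ and expands it in $1/n$. Combined with the boundary conditions, this expansion should yield
$$\tfrac{1}{n}\log\phi^{n,k}(0,\mu)\xrightarrow[n\to\infty]{}a\bigl(e^{\mu/k}-1\bigr),$$
so that, setting $\mu=k\lambda$,
$$\Lambda_T(\lambda):=\lim_{n\to\infty}\tfrac{1}{n}\log\E\bigl[e^{n\lambda T_n}\bigr]=a(e^{\lambda}-1).$$
A useful sanity check is $k=1$: the level increments are then i.i.d.\ $\mathcal{E}(n)$, so $J^{n,1}\sim\mathrm{Poisson}(na)$ and the identity $\tfrac{1}{n}\log\E[e^{\mu J^{n,1}}]=a(e^\mu-1)$ holds exactly for every finite~$n$. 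The principal technical obstacle lies precisely in this asymptotic step for general~$k$: one must locate the dominant root uniformly in $\mu$ over a full neighbourhood of zero (ideally over all of $\R$, to obtain essential smoothness), control its gap with the subdominant roots, and check that the boundary conditions excite this mode with a prefactor whose logarithm is $o(n)$.

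\textit{From $T_n$ to $\hat p^{n,k}$.} Once $\Lambda_T$ is identified, it is $C^\infty$ and strictly convex on all of $\R$, so the G\"artner--Ellis theorem (\cite{DZ}) yields an LDP for $T_n$ at speed $n$ with good rate function
$$J(t)=\sup_{\lambda\in\R}\bigl[\lambda t-a(e^\lambda-1)\bigr]=t\log(t/a)-t+a\;\;(t>0),\quad J(t)=+\infty\text{ otherwise.}$$
Applying the contraction principle to the continuous bijection $y=e^{-t}$ then gives an LDP for $\hat p^{n,k}$ with good rate $I(y)=J(-\log y)$; a short computation using $a=-\log p$ rewrites this as
$$I(y)=\log(y)\log\!\Bigl(\tfrac{\log p}{\log y}\Bigr)+\log(y/p)\quad\text{for }y\in(0,1),$$
and $+\infty$ otherwise, matching \eqref{eq:rate_I}. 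Goodness of $I$ follows from that of $J$ together with the continuity of the change of variable.
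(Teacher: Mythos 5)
Your overall route coincides with the paper's: reduce to the exponential case, use the memoryless/Markov structure of the level sequence to write a Volterra-type equation in the initial condition $x$, convert it into a linear constant-coefficient ODE of order $k$ with boundary data at $x=a$, extract the dominant characteristic root, and conclude by G\"artner--Ellis plus the contraction principle. The one real variation is your preliminary replacement of $\log\hat p^{n,k}$ by $-\frac{k}{n}J^{n,k}$, which discards $C^{n,k}$ and lets you work with the plain Laplace transform of $J^{n,k}$; the paper instead keeps the exact quantity, so the correction factor appears as the inhomogeneous term $\Theta_{n,k}$ of \eqref{eq:def_Theta} in Proposition \ref{propo:eq_funct}, and the scaling $\lambda\mapsto n\lambda$ handles $\log(1-k/n)$ without any Taylor remainder. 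Your variant buys a slightly simpler renewal equation, but the reduction is not free: the bound you invoke, $\log\hat p^{n,k}+\frac{k}{n}J^{n,k}=O(1/n)$, is not a deterministic uniform bound, since the error is $\log C^{n,k}+J^{n,k}\bigl(\log(1-k/n)+k/n\bigr)=O(1/n)+J^{n,k}\,O(1/n^2)$ with $J^{n,k}$ unbounded; you need genuine exponential equivalence at speed $n$, e.g. via stochastic domination of each level increment by an $\mathcal{E}(n)$ variable, which gives superexponentially small probabilities for $J^{n,k}\gtrsim n^2$. This is fixable but must be argued.

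The genuine gap is that the decisive analytical step is only asserted: you write that the dominant-root expansion ``should yield'' $\frac{1}{n}\log\phi^{n,k}(0,\mu)\to a(e^{\mu/k}-1)$, and you yourself flag the uniform location of the dominant root, the gap with the subdominant roots, and the non-degeneracy of the coefficient exciting the dominant mode as the principal obstacle. That is precisely the content the paper has to establish for $k>1$: the root asymptotics $\nu_{n,k}^{\ell}(\lambda)\sim n\bigl(1-e^{-\lambda}e^{2i\pi(\ell-1)/k}\bigr)$ together with the coefficient limit $\gamma_{n,k}^{1}(\lambda)\to 1$ (Proposition \ref{propo:EDO_resol}), which in turn requires the sharp asymptotics of the $k$ boundary derivatives at $x=a$ proved by induction in Lemma \ref{lemme:derivs}, and the observation that the subdominant modes have strictly larger real part and are therefore negligible at $x-a<0$. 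Without proving these points, for every $\lambda\in\R$ so that G\"artner--Ellis applies with an essentially smooth limit, the limit $\Lambda_T(\lambda)=a(e^{\lambda}-1)$ --- and hence Theorem \ref{th:LDP} for $k>1$ --- is not established; the exact Poisson computation for $k=1$ does not substitute for it. A minor point: the Fenchel--Legendre transform of $a(e^{\lambda}-1)$ at $t=0$ equals $a$, not $+\infty$, so your rate for $T_n$ (like the statement you are matching) is only off at the single boundary point $t=0$ (equivalently $y=1$, where $\PP(\hat p^{n,k}=1)=p^n$); this does not affect the interior values of $I$.
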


We observe that the rate function does not depend on $k$.

Notice that the statement above is restricted to $p\in(0,1)$. Indeed, when $p=1$, we have almost surely $\hat{p}^{n,k}=1$ (the algorithm stops after $0$ iteration). Moreover, we always estimate the probability of events which have a positive probability (otherwise the algorithm does not stop after a finite number of iterations).

The following Proposition describes some properties of the rate function $I$.
\begin{propo}
The rate function $I$ is of class $\mathcal{C}^{\infty}$ on its domain $(0,1)$.

Moreover, $p$ is the unique minimizer of $I$: we have $I(p)=I'(p)=0$, $I''(p)=\frac{1}{-p^2\log(p)}>0$.

Finally, for any $y\in(0,1)\setminus\left\{p\right\}$ we have $I(y)>0$; $I$ is decreasing on $(0,p)$ and is increasing on $(p,1)$.
\end{propo}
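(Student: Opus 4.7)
The plan is to change variables so that the convexity structure of $I$ becomes transparent. Setting $v=-\log(y)>0$ and $c=-\log(p)>0$ (both positive whenever $y,p\in(0,1)$), one has $\log(p)/\log(y)=c/v>0$, so the inner logarithm in \eqref{eq:rate_I} is well defined throughout $(0,1)$, and a direct rearrangement gives
\[
I(y)=g(v),\qquad g(v):=v\log(v)-v\log(c)+c-v.
\]
Since $y\mapsto v=-\log(y)$ is a $\mathcal{C}^{\infty}$ diffeomorphism from $(0,1)$ onto $(0,+\infty)$ and $g$ is $\mathcal{C}^{\infty}$ on $(0,+\infty)$, the $\mathcal{C}^{\infty}$ regularity of $I$ on $(0,1)$ follows immediately.

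I would then analyse $g$ directly. Differentiation gives $g'(v)=\log(v/c)$ and $g''(v)=1/v>0$, so $g$ is strictly convex on $(0,+\infty)$ with unique critical point $v=c$, and a one-line evaluation yields $g(c)=0$. Strict convexity then forces $g(v)>0$ for every $v\neq c$, together with $g$ strictly decreasing on $(0,c)$ and strictly increasing on $(c,+\infty)$.

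Translating these statements back through $y\mapsto v=-\log(y)$, which is a strictly decreasing bijection of $(0,1)$ onto $(0,+\infty)$ sending $y=p$ to $v=c$, gives all the claimed global assertions: $p$ is the unique minimizer of $I$ on $(0,1)$, $I(p)=0$ and $I(y)>0$ for $y\neq p$, and the monotonicities flip to give $I$ strictly decreasing on $(0,p)$ and strictly increasing on $(p,1)$. For the derivative values at $p$ I would apply the chain rule: $I'(y)=g'(v)\cdot(-1/y)$ evaluates at $y=p$ to $g'(c)\cdot(-1/p)=0$, and since $g'(c)=0$ the second derivative reduces to the single surviving term
\[
I''(p)=g''(c)\left(\frac{dv}{dy}\bigg|_{y=p}\right)^{2}=\frac{1}{c}\cdot\frac{1}{p^{2}}=\frac{1}{-p^{2}\log(p)}>0.
\]

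There is no real obstacle in this argument; the only points requiring any care are checking that all logarithms are applied to positive arguments throughout $(0,1)$ (which the substitution handles automatically) and the bookkeeping of the sign flip when transporting monotonicity from $v$ back to $y$. The mildly tedious chain-rule computation for $I''(p)$ is simplified by the fact that $g'$ vanishes at the critical point, so no second-order term in $dv/dy$ contributes.
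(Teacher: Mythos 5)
Your proof is correct, but it takes a somewhat different route from the paper's. The paper simply differentiates $I$ twice, recording
$I'(y)=\frac{1}{y}\log\bigl(\frac{\log p}{\log y}\bigr)$ and the corresponding expression for $I''$, and leaves the sign analysis (hence the monotonicity, positivity and the values at $p$) implicit in these formulas. You instead substitute $v=-\log(y)$, $c=-\log(p)$, which turns $I$ into the strictly convex function $g(v)=v\log(v/c)-v+c$ on $(0,+\infty)$, read off everything from strict convexity of $g$ and the vanishing of $g$ and $g'$ at $v=c$, and then transport back through the decreasing diffeomorphism $y\mapsto -\log(y)$, using the chain rule (and $g'(c)=0$) to get $I'(p)=0$ and $I''(p)=\frac{1}{-p^{2}\log(p)}$. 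Both arguments are elementary calculus and of comparable length, and your chain-rule value of $I'$ agrees with the paper's formula; what your change of variables buys is that positivity off $p$ and the monotonicity on $(0,p)$ and $(p,1)$ become immediate consequences of convexity rather than of a case-by-case sign discussion, and it makes explicit that $I=J\circ\log$ with $J$ the convex Fenchel--Legendre transform of Proposition \ref{propo:LDP_log} (your $g(v)$ is exactly $J(-v)$, the Poisson--Cramér rate function), which is precisely how $I$ arises via the contraction principle. The only point to state carefully, which you do, is the sign flip when translating monotonicity in $v$ back to monotonicity in $y$.
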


\begin{proof}
Straightforward computations yield that for $y\in(0,1)$ we have
\begin{gather*}
\frac{d I(y)}{dy}=\frac{\log(\log(p))-\log(\log(y))}{y},\\
\frac{d^2 I(y)}{dy^2}=-\frac{\log(\log(p))-\log(\log(y))}{y^2}-\frac{1}{y^2\log(y)}.
\end{gather*}
\end{proof}

Let $\epsilon\in(0,\max(p,1-p))$; then from Theorem \ref{th:LDP} we have when $n\rightarrow +\infty$
\begin{equation}\label{eq:appli_LDP_epsilon}
\frac{1}{n}\log\Bigl(\PP\bigl( |\hat{p}^{n,k}-p| \ge \epsilon \bigr)\Bigr)\underset{n\rightarrow +\infty}\rightarrow -\min\bigl(I(p+\epsilon),I(p-\epsilon)\bigr)<0.
\end{equation}
Applying the Borel-Cantelli Lemma, we get the almost sure convergence $\hat{p}^{n,k}\rightarrow p$.

\begin{rem}
The almost sure limit is consistent with the unbiasedness result ($\E[\hat{p}^{n,k}]=p$) from \cite{BLR}. There we were only able to prove the convergence in probability of $\hat{p}^{n,k}$ to $p$.

Notice also that in \cite{BGT} we proved a Central Limit Theorem: $$\sqrt{n}\bigl(\hat{p}^{n,k}-p\bigr)\rightarrow \mathcal{N}\bigl(0,-p^2\log(p)\bigr).$$ The  asymptotic variance is given by $I''(p)$.
\end{rem}

We conclude this section with a result showing that the choice of the regime $p$ (and $k$) fixed and $n\rightarrow +\infty$ is crucial to get Theorem \ref{th:LDP}. Indeed, set $k=1$, and for a given $\sigma>0$ assume that $n$ and $p$ are related though the following formula: $-\log(p)=\sigma^2 n$. Then $\frac{p^{n,k}}{p}$ converges (in law) to a log-normal distribution, as stated in the following proposition.

\begin{propo}\label{propo:log-norm}
If $-\log(p)=\sigma^2 n$, we have the convergence in distribution
$$\lim_{n \rightarrow \infty} \frac{\hat{p}^{n,1}}{p} =\exp(\sigma Z-\sigma^2/2),$$
where $Z\sim \mathcal{N}(0,1)$.
\end{propo}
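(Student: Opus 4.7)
The plan is to exploit the simple structure of the algorithm when $k=1$, combined with the exponential reduction of Section~\ref{sect:prop_AMS}, to reduce the claim to a standard central limit theorem for a Poisson random variable.

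First, I would work in the exponential setting, so that $X\sim\mathcal{E}(1)$ and the threshold is $a_{\rm expo}=-\log(p)=\sigma^2 n$. For $k=1$, the memoryless property of the exponential distribution gives the following structural fact: the successive level-increments $T_j=Z^j-Z^{j-1}$ (with $Z^0=0$) are i.i.d. with distribution $\mathcal{E}(n)$. Indeed, after removing the smallest of the $n$ replicas, the remaining $n-1$ replicas are (conditionally on $Z^j$) i.i.d. $\mathcal{E}(1)$ shifted by $Z^j$, and the new replica drawn from $\mathcal{L}(X\mid X>Z^j)$ is again $Z^j+\mathcal{E}(1)$; the next level $Z^{j+1}$ is the minimum of these $n$ shifted exponentials, so $Z^{j+1}-Z^j\sim\mathcal{E}(n)$. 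Consequently $J^{n,1}$ is exactly the number of points of a rate-$n$ Poisson process lying in $[0,a_{\rm expo})$, i.e. $J^{n,1}\sim\mathrm{Poisson}(n\,a_{\rm expo})=\mathrm{Poisson}(\sigma^2 n^2)$.

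Second, since $C^{n,1}\equiv 1$, formula~\eqref{eq:estimator} gives $\hat{p}^{n,1}=(1-1/n)^{J^{n,1}}$, hence
\begin{equation*}
\log\!\left(\frac{\hat{p}^{n,1}}{p}\right)=J^{n,1}\,\log\!\left(1-\frac{1}{n}\right)+\sigma^2 n.
\end{equation*}
The CLT for a Poisson variable of parameter $\lambda_n=\sigma^2 n^2$ yields
\begin{equation*}
Z_n:=\frac{J^{n,1}-\sigma^2 n^2}{\sigma n}\xrightarrow[n\to\infty]{\mathcal{L}}\mathcal{N}(0,1).
\end{equation*}
Inserting the Taylor expansion $\log(1-1/n)=-1/n-1/(2n^2)+O(1/n^3)$ and using $J^{n,1}=\sigma^2 n^2+\sigma n Z_n$,
\begin{equation*}
\log\!\left(\frac{\hat{p}^{n,1}}{p}\right)=\sigma^2 n+\bigl(\sigma^2 n^2+\sigma n Z_n\bigr)\Bigl(-\tfrac{1}{n}-\tfrac{1}{2n^2}+O(n^{-3})\Bigr)=-\tfrac{\sigma^2}{2}-\sigma Z_n+R_n,
\end{equation*}
with $R_n=O(n^{-1})+O(Z_n/n)\to 0$ in probability.

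Finally, Slutsky's lemma applied to the convergence $Z_n\Rightarrow Z\sim\mathcal{N}(0,1)$ gives the convergence in distribution of $\log(\hat{p}^{n,1}/p)$ to $-\sigma^2/2-\sigma Z$, and the continuous mapping theorem (applied to the exponential) together with the symmetry $-Z\stackrel{d}{=}Z$ yields the announced limit $\exp(\sigma Z-\sigma^2/2)$. The main technical point is the bookkeeping of the Taylor remainder, ensuring that the $1/n^2$ correction in $\log(1-1/n)$ contributes the deterministic shift $-\sigma^2/2$ but that the remaining $O(Z_n/n)$ term vanishes in probability; everything else is essentially a consequence of the exact Poisson description of $J^{n,1}$.
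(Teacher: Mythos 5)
Your proof is correct and follows essentially the same route as the paper: exploit the exact Poisson law of $J^{n,1}$ (mean $na=\sigma^2 n^2$), apply the CLT for Poisson variables, and carefully Taylor-expand $\log(1-1/n)$ to second order so that the $1/(2n^2)$ term produces the deterministic shift $-\sigma^2/2$ while the cross term $O(Z_n/n)$ vanishes in probability. The paper presents this by directly rewriting the exponent as $\frac{J^{n,1}-na}{\sqrt{na}}\cdot\sqrt{na}\log(1-1/n)+\bigl(a+na\log(1-1/n)\bigr)$ rather than substituting $J^{n,1}=\sigma^2 n^2+\sigma n Z_n$, but this is only cosmetic; you additionally recall why $J^{n,1}$ is Poisson (via the memoryless property), which the paper takes as a known fact from earlier sections.
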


The proof is postponed to Section \ref{sect:proof_1}, since it uses the same arguments as the proof of Theorem \ref{th:LDP} in the case $k=1$.

Let $\epsilon>0$. Then (compare with \eqref{eq:appli_LDP_epsilon} with $\epsilon p$ instead of $\epsilon$)
$$\PP\bigl( |\frac{\hat{p}^{n,1}}{p}-1| \ge \epsilon \bigr)\underset{n=-\frac{\log(p)}{\sigma^2}\rightarrow +\infty}\longrightarrow \PP_{Z\sim \mathcal{N}(0,1)}\bigl( |\exp(\sigma Z-\sigma^2/2)-1| \ge \epsilon \bigr)\Bigr)>0,$$
where the limit is positive, while owing to \eqref{eq:appli_LDP_epsilon} when $p$ fixed, $\PP\bigl( |\frac{\hat{p}^{n,1}}{p}-1| \ge \epsilon \bigr)$ converges to $0$ exponentially fast when $n\rightarrow +\infty$.


\section{Strategy of the proof}\label{sect:strategy}

To prove Theorem \ref{th:LDP}, we in fact first prove a Large Deviations Principle for $\tilde{\mu}^{n,k}=\mathcal{L}\bigl( \log(\hat{p}^{n,k})\bigr)$, with rate function $J$ given below.

\begin{propo}\label{propo:LDP_log}
Assume that $p\in(0,1)$ and $k\in\N^*$ are fixed. Then the sequence $\bigl(\tilde{\mu}^{n,k}\bigr)_{n\in \N, n>k}$ of distributions of $\log(\hat{p}^{n,k})$ obtained by the ${\rm AMS}(n,k;a)$ algorithm satisfies a Large Deviations Principle with the rate function $J$ defined by
\begin{equation}\label{eq:rate_J}
J(z)=\begin{cases} +\infty \text{ if } z\geq 0\\ z-\log(p)-z\log(\frac{z}{\log(p)}) \text{ if } z<0. \end{cases}
\end{equation}
\end{propo}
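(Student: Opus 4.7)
The plan is to apply the G\"artner--Ellis theorem to the sequence $\log(\hat{p}^{n,k})$ at scale $n$. By the reduction recalled in Section \ref{sect:prop_AMS}, I may assume that the replicas are $\mathcal{E}(1)$-distributed and that the threshold is $a=-\log(p)$. Following the approach of \cite{BLR}, I would introduce the family of Laplace transforms
\[
\varphi_n(\lambda;x)=\E\bigl[\exp\bigl(n\lambda\log(\hat{p}^{n,k}(x))\bigr)\bigr],\qquad x\in[0,a],\ \lambda\in\R,
\]
parametrised by the initial level $x$, and aim to establish that $\Lambda(\lambda):=\lim_{n\to+\infty}\tfrac{1}{n}\log\varphi_n(\lambda;0)=\log(p)\bigl(1-e^{-\lambda}\bigr)$; since this $\Lambda$ is $\mathcal{C}^\infty$ and essentially smooth on $\R$, G\"artner--Ellis will then yield the LDP with rate function $J=\Lambda^*$.

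The first step is to derive a functional equation for $\varphi_n(\lambda;\cdot)$ by conditioning on the first iteration of Algorithm \ref{algo:AMS}. The memoryless property of the exponential law implies that, conditional on $Z^1=x+V_{(k,n)}$ with $Z^1<a$, the remainder of the algorithm is distributed as an independent copy of ${\rm AMS}(n,k;a,Z^1)$ and $\hat{p}^{n,k}(x)=(1-\tfrac{k}{n})\hat{p}^{n,k}(Z^1)$; on $\{Z^1\geq a\}$ the algorithm stops at once with $J^{n,k}(x)=0$ and $\hat{p}^{n,k}(x)=C^{n,k}(x)$. Splitting on these two events yields an integral equation
\[
\varphi_n(\lambda;x)=\bigl(1-\tfrac{k}{n}\bigr)^{n\lambda}\int_0^{a-x}\varphi_n(\lambda;x+v)\,f_{(k,n)}(v)\,dv+R_n(\lambda;x),
\]
with $f_{(k,n)}(v)\propto(1-e^{-v})^{k-1}e^{-(n-k+1)v}$ and a boundary term $R_n(\lambda;x)$ supported on $\{Z^1\geq a\}$. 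Differentiating $k$ times in $x$ eliminates the polynomial factor produced by $f_{(k,n)}$ and converts the equation into a linear ODE of order $k$ with coefficients explicit in $n$; this conversion, and the basic analysis of its coefficients, are carried out in detail in \cite{BLR}, and I would adopt them verbatim, only reindexed by the new exponent $n\lambda$.

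The analytic heart of the proof is the asymptotic analysis of this order-$k$ ODE as $n\to+\infty$. Inserting the WKB-type ansatz $\varphi_n(\lambda;x)=\exp\bigl(n\,\Phi(\lambda;x)+o(n)\bigr)$ and keeping the dominant power of $n$ reduces the ODE to a polynomial equation of degree $k$ for $\partial_x\Phi(\lambda;x)$; the relevant root, singled out by the boundary condition $\Phi(\lambda;a)=0$ (which encodes $\varphi_n(\lambda;a)=1$), is $\partial_x\Phi=1-e^{-\lambda}$, yielding $\Phi(\lambda;x)=(x-a)\bigl(1-e^{-\lambda}\bigr)$ and $\Lambda(\lambda)=\log(p)\bigl(1-e^{-\lambda}\bigr)$. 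The Legendre transform is then explicit: the critical point equation $\Lambda'(\lambda)=z$, i.e.\ $e^{-\lambda}=z/\log(p)$, has a real solution $\lambda_\star(z)=-\log(z/\log(p))$ only when $z<0$ (consistent with the hard bound $\log(\hat{p}^{n,k})\leq 0$), giving $J(z)=z-\log(p)-z\log(z/\log(p))$ on $(-\infty,0)$ and $J(z)=+\infty$ for $z>0$, in agreement with formula \eqref{eq:rate_J}.

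The principal obstacle will be to justify the WKB expansion rigorously. Concretely, I would need to prove uniform convergence of $\tfrac{1}{n}\log\varphi_n(\lambda;x)$ on the whole interval $[0,a]$ to $\Phi(\lambda;x)$, rule out any contamination by the subdominant characteristic roots of the order-$k$ ODE, and control the boundary term $R_n(\lambda;x)$ for every $\lambda\in\R$; special care will be needed as $\lambda\to-\infty$ where $\Lambda(\lambda)$ diverges. Once this uniform analysis is in place, the closed form of $\Lambda$ makes essential smoothness immediate and G\"artner--Ellis directly delivers Proposition \ref{propo:LDP_log}.
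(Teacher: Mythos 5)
Your plan follows the paper's proof almost step for step: reduce to the exponential case, form $\varphi_n(\lambda;x)=\E\bigl[\exp\bigl(n\lambda\log\hat p^{n,k}(x)\bigr)\bigr]$, derive a functional equation by conditioning on the first iteration, convert it to a linear ODE of order $k$, extract $\Lambda(\lambda)=\log(p)\bigl(1-e^{-\lambda}\bigr)$, and apply G\"artner--Ellis; the Legendre-transform computation at the end is also identical. The one place where you stop short is exactly where the paper's technical content lives. You say the ``relevant root'' of the degree-$k$ polynomial for $\partial_x\Phi$ is ``singled out by the boundary condition $\Phi(\lambda;a)=0$'', but every one of the $k$ characteristic exponentials $\exp\bigl(\nu_{n,k}^{\ell}(\lambda)(x-a)\bigr)$ equals $1$ at $x=a$, so that single condition selects nothing; and a WKB ansatz $\varphi_n=\exp\bigl(n\Phi+o(n)\bigr)$ already presupposes the conclusion that one exponential dominates. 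What makes the selection rigorous in the paper is a finer boundary analysis: Lemma~\ref{lemme:derivs} shows that \emph{all} $k$ derivatives of $\Gamma_{n,k}(\lambda;\cdot)$ at $x=a$ satisfy $\frac{d^m}{dx^m}\Gamma_{n,k}(\lambda;x)\big|_{x=a}\sim n^m\bigl(1-e^{-\lambda}\bigr)^m$ for $0\le m\le k-1$, which is precisely the Taylor data of the single exponential $\exp\bigl(\nu^1_{n,k}(\lambda)(x-a)\bigr)$ with $\nu^1_{n,k}(\lambda)\sim n\bigl(1-e^{-\lambda}\bigr)$. Feeding this into the Vandermonde system for the coefficients $\gamma_{n,k}^{\ell}(\lambda)$ in the general solution (Proposition~\ref{propo:EDO_resol}) gives $\gamma_{n,k}^{1}(\lambda)\to 1$ and $\gamma_{n,k}^{\ell}(\lambda)\to 0$ for $\ell\ge 2$, and dominance on $[0,a)$ then follows from $\mathrm{Re}\bigl(1-e^{-\lambda}e^{i2\pi(\ell-1)/k}\bigr)>\mathrm{Re}\bigl(1-e^{-\lambda}\bigr)$ for $\ell\ge 2$. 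So the obstacle you correctly flag is resolved in the paper not by a WKB matching argument but by computing all $k$ boundary derivatives and inverting a Vandermonde matrix; that lemma is the essential step your proposal leaves open.
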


Then Theorem \ref{th:LDP} immediately follows from Proposition \ref{propo:LDP_log} and the application of the contraction principle (see \cite{DZ}, Theorem $4.2.1$): we have $\hat{p}^{n,k}=\exp\bigl(\log(\hat{p}^{n,k})\bigr)$, and we obtain the rate function with the identity $I(y)=J(\log(y))$.

The proof of Proposition \ref{propo:LDP_log} relies on the use of the G\"artner-Ellis Theorem (see Theorem $2.3.6$ in \cite{DZ}) and the asymptotic analysis  when $n\rightarrow +\infty$ of the log-Laplace transform of $\tilde{\mu}^{n,k}$.

\begin{propo}\label{propo:Lambda}
Set for any $1\leq k\leq n-1$ and any $\lambda\in \R$
\begin{equation}\label{eq:Lambda}
\Lambda_{n,k}(\lambda)=\log \Bigl( \E \Bigl[\exp \bigr(\lambda \log(\hat{p}^{n,k}) \bigl) \Bigr] \Bigr).
\end{equation}
Then for any fixed $k\in\N^*$ and any $\lambda\in \R$ we have the convergence
\begin{equation}\label{eq:Lambda_conv}
\frac{1}{n}\Lambda_{n,k}(n\lambda)\rightarrow \Lambda(\lambda)=-\log(p)(\exp(-\lambda)-1).
\end{equation}
The Fenchel-Legendre transform $\Lambda^*$ of $\Lambda$ satisfies:
\begin{equation}\label{eq:Lambda^*}
\begin{aligned}
\Lambda^*(z)&=\sup_{\lambda\in \R}\bigl(\lambda z-\Lambda(\lambda)\bigr)\\
&=\begin{cases}+\infty \text{ if } z\geq 0\\ z-\log(p)-z\log(\frac{z}{\log(p)})   \text{ if } z<0. \end{cases}
\end{aligned}
\end{equation}
\end{propo}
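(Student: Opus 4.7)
The plan is to handle the two claims separately: the Fenchel--Legendre computation is a short calculus exercise, while the limit \eqref{eq:Lambda_conv} is the substantive part and requires the functional-equation technique imported from \cite{BLR}.

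For \eqref{eq:Lambda^*}, first note that $\Lambda(\lambda)=-\log(p)(e^{-\lambda}-1)$ is $\mathcal{C}^{\infty}$ with derivative $\Lambda'(\lambda)=\log(p)\,e^{-\lambda}$, which ranges over $(-\infty,0)$ since $\log(p)<0$. Thus for $z\ge 0$ the function $\lambda\mapsto \lambda z-\Lambda(\lambda)$ tends to $+\infty$ as $\lambda\to +\infty$, giving $\Lambda^{*}(z)=+\infty$. For $z<0$, the critical equation $z=\log(p)\,e^{-\lambda}$ has the unique solution $\lambda^{*}(z)=-\log\bigl(z/\log(p)\bigr)$, and substituting back yields
$$
\Lambda^{*}(z)=\lambda^{*}(z)\,z-\Lambda(\lambda^{*}(z))=-z\log\bigl(z/\log(p)\bigr)+z-\log(p),
$$
which is the announced formula. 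Convexity of $\Lambda$ guarantees this critical point is the global maximum.

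For \eqref{eq:Lambda_conv}, I follow the strategy from \cite{BLR}. Using Section \ref{sect:prop_AMS}, I reduce to the exponential case: $X\sim\mathcal{E}(1)$ with threshold $a_{\rm expo}=-\log(p)$. I introduce the \emph{initial-condition-dependent} Laplace transform
$$
L_{n,k}(\lambda,x)=\E\Bigl[\bigl(\hat{p}^{n,k}(x)\bigr)^{\lambda}\Bigr],\qquad x\in[0,a_{\rm expo}],
$$
so that $\exp(\Lambda_{n,k}(\lambda))=L_{n,k}(\lambda,0)$. The crucial input from the exponential setting is the memoryless property: conditional on the first level $Z^{1}=z<a_{\rm expo}$, the resampled system of $n$ replicas has the same joint distribution as the initial configuration of an ${\rm AMS}_{\rm expo}(n,k;a_{\rm expo},z)$ run, and the new iteration count satisfies $J^{n,k}(x)=1+J^{n,k}(z)$ in distribution, while $C^{n,k}(x)=C^{n,k}(z)$. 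Splitting the expectation on the event $\{Z^{1}\ge a_{\rm expo}\}$ versus $\{Z^{1}<a_{\rm expo}\}$ and using the explicit density of the $k$-th order statistic of $n$ i.i.d.\ $\mathcal{E}(1)$ variables shifted by $x$ gives an integral equation of the form
$$
L_{n,k}(\lambda,x)=R_{n,k}(\lambda,x)+\Bigl(1-\tfrac{k}{n}\Bigr)^{\lambda}\int_{x}^{a_{\rm expo}} L_{n,k}(\lambda,z)\,\rho_{n,k}(z-x)\,dz,
$$
with an explicit boundary term $R_{n,k}$ coming from the case $J=0$ and an explicit kernel $\rho_{n,k}$. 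This is precisely the set-up of \cite{BLR} to which I defer for the detailed expressions.

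Differentiating this equation $k$ times in $x$ turns it into a linear ODE of order $k$ with constant coefficients in $x$; the boundary conditions are obtained by evaluation at $x=a_{\rm expo}$. I then substitute $\lambda\leftarrow n\lambda$ and analyze the characteristic polynomial in the regime $n\to\infty$. The main obstacle, and the only really new analytic content beyond the bookkeeping of \cite{BLR}, is to identify the dominant root of this polynomial and show that its contribution at $x=0$ evaluates to $\exp\bigl(n\Lambda(\lambda)+o(n)\bigr)$ with $\Lambda(\lambda)=-\log(p)(e^{-\lambda}-1)$, while all other modes and the inhomogeneous boundary terms contribute negligibly. Heuristically, the correct scaling comes from $\bigl(1-k/n\bigr)^{n\lambda}\to e^{-k\lambda}$ combined with the order-$n$ number of iterations needed to cross from $0$ to $-\log(p)$, so the asymptotic log of $L_{n,k}(n\lambda,0)$ is driven by $(-\log p)$ times the moment-generating contribution of each iteration; this produces the factor $e^{-\lambda}-1$. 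Rigorously justifying this requires uniform control on the subdominant roots and on $R_{n,k}$, which is done by the same Laplace-method asymptotics as in \cite{BLR}. Once \eqref{eq:Lambda_conv} is established, Proposition \ref{propo:Lambda} is complete.
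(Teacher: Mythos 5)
Your proposal is correct and follows essentially the same route as the paper: reduction to the exponential case, an $x$-dependent functional equation for the Laplace transform, passage to a constant-coefficient linear ODE of order $k$, and asymptotic identification of the dominant characteristic root (the paper also gives a self-contained $k=1$ proof via the Poisson law of $J^{n,1}$, but that case is absorbed into the general argument). The Fenchel--Legendre computation you give is the paper's verbatim.

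One imprecision worth flagging: you speak of the ``inhomogeneous boundary terms'' as contributing negligibly, but the structure is slightly different. In the paper the inhomogeneity $\Theta_{n,k}$ of the functional equation is itself a linear combination of the exponentials $e^{-(n-j)x}$, $0\le j\le k-1$, and is therefore annihilated by the characteristic operator: the resulting ODE for $\Gamma_{n,k}$ is \emph{homogeneous}. The boundary term does not disappear as a negligible forcing; it re-enters through the initial conditions $\frac{d^m}{dx^m}\Gamma_{n,k}(\lambda;\cdot)\big|_{x=a}=\frac{d^m}{dx^m}\Theta_{n,k}(\lambda;\cdot)\big|_{x=a}$, and the sharp asymptotics $\frac{d^m}{dx^m}\Theta_{n,k}(\lambda;x)\big|_{x=a}\sim n^m(1-e^{-\lambda})^m$ (Lemma~\ref{lemme:derivs}) are the genuinely new analytic ingredient beyond \cite{BLR}: they are what forces the coefficient $\gamma_{n,k}^{1}(\lambda)$ of the dominant mode to converge to $1$, while the coefficients of the $k-1$ subdominant modes tend to $0$. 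It is the combination of the ordering of real parts of the roots $\nu_{n,k}^{\ell}(\lambda)\sim n(1-e^{-\lambda}e^{2\pi i(\ell-1)/k})$ with this boundary-condition asymptotics, not a Laplace-method estimate of a forcing term, that isolates the leading exponential $e^{\nu_{n,k}^{1}(\lambda)(x-a)}$ and yields $\Lambda(\lambda)=-\log(p)(e^{-\lambda}-1)$ at $x=0$, $a=-\log p$.
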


Then for any $k\in \N^*$, the sequence of distributions $\bigl(\tilde{\mu}^{n,k}\bigr)_{n\in\N, n>k}$ satisfies a Large Deviations Principle, with the rate function $J=\Lambda^*$.

The proof of \eqref{eq:Lambda_conv} is the main task of this paper. In Section \ref{sect:proof_1}, we give a first easy proof in the case $k=1$, relying on the knowledge of the distribution of $J^{n,1}$: it is a Poisson distribution with mean $-n\log(p)$. We can then compute explicitly $\Lambda_{n,1}(\lambda)$ and prove \eqref{eq:Lambda_conv}. In Section \ref{sect:proof_k}, we study the general case $k\geq 1$ with the method introduced in \cite{BLR}, in the exponential case: for the algorithm ${\rm AMS}_{\rm expo}(n,k;a,x)$, we derive a functional equation on the Laplace transform $\exp\bigl(\Lambda_{n,k}(\lambda)$ as a function of the initial condition $x$, for fixed parameter $\lambda$.

For completeness, we close this Section with the computation of the Fenchel-Legendre transform $J=\Lambda^*$ of $\Lambda$ in Proposition \ref{propo:Lambda}.

\begin{proof}
First, assume that $z\geq 0$. Then $\lambda z-\Lambda(\lambda)\rightarrow +\infty$ when $\lambda\rightarrow +\infty$: thus $\Lambda^*(z)=+\infty$. Notice that this result is not surprising, since $\log(\hat{p}^{n,k})<0$ almost surely.

If $z<0$, the map $\lambda\in \R\mapsto \lambda z-\Lambda(\lambda)$ admits the limit $-\infty$ for $z\rightarrow \pm \infty$, and attains its maximum at the unique solution $\lambda_z$ of the equation $z-\frac{d\Lambda(\lambda)}{d\lambda}(\lambda_z)=0$, which is given by $\lambda_z=-\log\bigl(\frac{z}{\log(p)}\bigr)$. Then $\Lambda^*(z)=\lambda_z z-\Lambda(\lambda_z)$, which gives \eqref{eq:Lambda^*}.
\end{proof}

\section{Proof of Proposition \ref{propo:Lambda}}\label{sect:proof}

\subsection{The case $k=1$}\label{sect:proof_1}

We start with a proof of Theorem \ref{th:LDP} when $k=1$: in this case, we have $C^{n,1}=1$ almost surely, and the number of iterations $J^{n,1}$ follows a Poisson distribution $\mathcal{P}(-n\log(p))$ (see for instance \cite{BLR}, \cite{G..}).

As a consequence, it is very easy to prove Proposition \ref{propo:LDP_log}. Let $\lambda\in\R$. Then
\begin{align*}
\Gamma_{n,1}(\lambda)&=\exp\big(\Lambda_{n,1}(\lambda)\bigr)\\
&=\E\big[\exp\bigl(\lambda\log(\hat{p}^{n,1})\bigr)\big]\\
&=\E\big[\exp\bigl(\lambda\log(1-1/n) J^{n,1}\bigr)\big]\\
&=\exp\Bigl(-n\log(p)\bigl(\exp(\lambda\log(1-1/n))-1\bigr)\Bigr).
\end{align*}

It is now easy to conclude: when $n\rightarrow +\infty$
\begin{align*}
\frac{1}{n}\log\big(\Lambda(n\lambda)\big)&=-\log(p)\big(\exp(n\lambda\log(1-1/n))-1\big)\\
&\underset{n\rightarrow+\infty}\rightarrow -\log(p)\big(\exp(-\lambda)-1\big).
\end{align*}

We have performed explicit calculations, using the knowledge of the distribution of $J^{n,1}$. However for $k>1$, we cannot rely on such simple arguments and we need other tools.

We would like to use the connexion with the Poisson distribution in order to give an interpretation of the rate functions $I$ and $J$. More precisely, $I$ is obtained from $J$ by the contraction principle ($I(y)=J(\log(y))$), and $J$ is the rate function obtained in the Cramer theorem where the distribution $R$ is such that $-R\sim\mathcal{P}\bigl(-\log(p)\bigr)$. Indeed, let $(R_m)_{m\in \N^*}$ be independent, with the same distribution as $X$; if we denote by $\overline{R}_n=\frac{1}{n}\sum_{m=1}^{n}R_m$ the empirical average, we compute for any $\lambda\in \R$
\begin{align*}
\E\big[\exp\bigl(n\lambda\overline{R}_n\bigr)\big]&=\Bigl(\E\big[\exp\bigl(\lambda R\bigr) \Bigr)^{n}\\
&=\Bigl(\exp\bigl(-\log(p)\bigl(\exp(-\lambda)-1\bigr)\bigr) \Bigr)^{n}.
\end{align*}

To conclude this section on the case $k=1$, we prove Proposition \ref{propo:log-norm}. We use again the explicit knowledge of the distribution of $J^{n,1}$ and use a Central Limit Theorem on exponential distributions to conclude.
\begin{proof}[Proof of Proposition \ref{propo:log-norm}]
We write (with $a=-\log(p)=\sigma^2 n$)
\begin{align*}
\frac{\hat{p}^{n,1}}{p}&=\exp(J^{n,1}\log(1-1/n)+a)\\
&=\exp\left(\frac{J^{n,1}-n a}{\sqrt{n a}}\sqrt{n a}\log(1-1/n)+a+n a\ln(1-1/n)\right).
\end{align*}
By the Central Limit Theorem on the Poisson distribution, one gets, in the limit $n \rightarrow +\infty$, the following convergence in distribution
$$\frac{J^{n,1}-n a}{\sqrt{n a}}\rightarrow \mathcal{N}(0,1).$$
Moreover, when $n\rightarrow +\infty$, we have $\sqrt{n a}\log(1-1/n)=n\sigma \log(1-1/n)\rightarrow -\sigma$ and $a+n a\log(1-1/n)=\sigma^2\left(n+n^2\ln(1-1/n)\right)$ tends to $\frac{-\sigma^2}{2}$. This concludes the proof.

\end{proof}

\subsection{The general case}\label{sect:proof_k}

In this section, we give the main arguments used to prove Proposition \ref{propo:Lambda} in the general case $k\in\N^*$. In particular, we want to show that the rate function we obtain does not depend on $k$. The proof of some important but technical results is postponed to Section \ref{sect:detailed_proof}.

Even if in Section \ref{sect:proof_1} above we have proved Proposition \ref{propo:Lambda} in the case $k=1$, we include this case in our general framework, and obtain an alternative proof.

To this aim, we make use of the strategy introduced in \cite{BLR} to study the properties of the ${\rm AMS}(n,k;a)$ algorithm. First, as explained in Section \ref{sect:prop_AMS}, we are allowed to restrict the study to the case where $X$ is exponentially distributed: it is enough to study the ${\rm AMS}_{\rm expo}(n,k;a_{\rm expo})$ algorithm, where $a_{\rm expo}=-\log(p)$.

Moreover, one of the main ideas is to consider the initial condition of the algorithm as an extra variable: for $x\in[0,a)$, we study the ${\rm AMS}_{\rm expo}(n,k;a_{\rm expo},x)$ algorithm. From now on, in this Section, and in Section \ref{sect:detailed_proof}, we only consider the exponential case and we omit the dependence.

\begin{defi}
We use the following notation: for any $(x,y)\in\R^2$
\begin{gather*}
f(y)=\exp(-y)\mathds{1}_{y\ge 0}\quad , \quad F(y)=\bigl(1-\exp(-y)\bigr)\mathds{1}_{y\ge 0}=\int_{-\infty}^{y}f(z)dz;\\
f(y;x)=\frac{f(y)}{1-F(x)}\mathds{1}_{y\ge x}\quad , \quad F(y;x)=\frac{F(y)-F(x)}{1-F(x)}\mathds{1}_{y\ge x}=\int_{-\infty}^{y}f(z;x)dz;\\
f_{n,k}(y;x)=k\binom{n}{k}F(y;x)^{k-1}f(y;x)\bigl(1-F(y;x)\bigr)^{n-k},\\
F_{n,k}(y;x)=\int_{x}^{y}f_{n,k}(z;x)dz.
\end{gather*}
Let $X$ be exponentially distributed with parameter $1$. Then $f$ (resp. $F$)) is the density (resp. the c.d.f.) of $\mathcal{L}(X)$. For $x\geq 0$, $f(\cdot;x)$ (resp. $F(\cdot;x)$) is the density (resp. the c.d.f.) of the conditional distribution $\mathcal{L}\bigl(X |X>x\bigr)$.

Finally, let $(X_1,\ldots,X_n)$ be i.i.d. with the distribution of $\mathcal{L}(X)$, with the associated order statistics $X_{(1)}<\ldots<X_{(n)}$. Then $f_{n,k}(\cdot;x)$ (resp. $F_{n,k}(\cdot;x)$) is the density (resp. the c.d.f.) of the $k$-th order statistic $X_{(k)}$.
\end{defi}




The main object we need to study is the following function $\Gamma_{n,k}$ of $\lambda\in \R$ (considered as a fixed parameter) and the initial condition $x\in[0,a]$
\begin{eqnarray}\label{eq:def_Gamma}
\Gamma_{n,k}(\lambda;x)&=\E \Bigl[\exp \bigl(n\lambda \log(\hat{p}^{n,k}(x)) \bigl) \Bigr]\\
&=\exp\bigl(\Lambda_{n,k}(n\lambda;x)\bigr). \nonumber
\end{eqnarray}
Notice that we include $x=a$ in the domain of definition of the functions $\Gamma_{n,k}$ and $\Lambda_{n,k}$ (defined by \eqref{eq:Lambda}). It is also important to remark that we evaluate the latter at $(n\lambda;x)$.

We state several fundamental results which together yield Proposition \ref{propo:Lambda} in the $x$-dependent case; to get \eqref{eq:Lambda_conv} it is then enough to take $x=0$.

First, Proposition \ref{propo:eq_funct} gives a functional equation satisfied by $\Gamma_{n,k}(\lambda;\cdot)$ on $[0,a]$, for any value of the parameters $1\leq k< n$ and $\lambda\in\R$.

We use the following auxiliary function:
\begin{equation}\label{eq:def_Theta}
\Theta_{n,k}(\lambda;x)=\sum_{\ell=0}^{k-1}\exp\bigl(n\lambda\log(1-\frac{\ell}{n})\bigr)\Bigl(F_{n,\ell}(a;x)-F_{n,\ell+1}(a;x) \Bigr),
\end{equation}
with the convention $F_{n,0}(y;x)=\mathds{1}_{y\geq x}$.

\begin{propo}\label{propo:eq_funct}
For any $n\in\N^*$, $k\in\left\{1,\ldots,n-1\right\}$, and $\lambda\in \R$, the function $\Gamma_{n,k}(\lambda;\cdot)$ is solution on the interval $[0,a]$ of the functional equation (with the unknown $\Gamma$):
\begin{equation}\label{eq:eq_funct}
\Gamma(x)=\int_{x}^{a}  \exp\Bigl(n\lambda\log(1-\frac{k}{n})\Bigr)   \Gamma(y)f_{n,k}(y;x)dy+\Theta_{n,k}(\lambda;x). 
\end{equation}
\end{propo}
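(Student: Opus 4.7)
The plan is to condition on the value of the first level $Z^1 = X_{(k)}^0$ of the algorithm and split the expectation defining $\Gamma_{n,k}(\lambda;x)$ according to whether the algorithm terminates at the first check ($Z^1 \geq a$, so $J^{n,k}(x)=0$) or performs at least one resampling step ($Z^1 < a$). The first contribution will reproduce $\Theta_{n,k}(\lambda;x)$, while the second will give rise to the integral in \eqref{eq:eq_funct} through a Markov-type self-similarity of the algorithm with respect to its initial condition.

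For the terminating branch, on $\{Z^1 \geq a\}$ we have $\hat{p}^{n,k}(x) = C^{n,k}(x)$, and the event decomposes into the $k$ disjoint pieces ``exactly $\ell$ of the $X_i^0$ lie in $[x,a)$'' for $\ell\in\{0,\ldots,k-1\}$, on each of which $C^{n,k}(x) = 1 - \ell/n$. Since the $X_i^0$ are i.i.d.\ with law $\mathcal{L}(X\mid X>x)$ and $F$ is continuous, a standard identity for order statistics identifies the probability of the $\ell$-th piece with $F_{n,\ell}(a;x) - F_{n,\ell+1}(a;x)$, using the convention $F_{n,0}(a;x)=1$. Multiplying by $\exp\bigl(n\lambda\log(1-\ell/n)\bigr)$ and summing over $\ell$ then reproduces $\Theta_{n,k}(\lambda;x)$ from \eqref{eq:def_Theta}.

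For the continuing branch, the main tool is a Markov-type property of the AMS dynamics: conditionally on $Z^1 = y$ with $y<a$, the new $n$-tuple $X^1$ is i.i.d.\ with law $\mathcal{L}(X\mid X>y)$. This rests on two facts. The $k$ resampled replicas $(Y_1^1,\ldots,Y_k^1)$ are i.i.d.\ $\mathcal{L}(X\mid X>y)$ by construction, while the $n-k$ surviving replicas, which form the upper block of the order statistics of $X^0$ conditionally on $X_{(k)}^0=y$, are also i.i.d.\ $\mathcal{L}(X\mid X>y)$ thanks to a classical property of order statistics of i.i.d.\ samples with continuous c.d.f.\ Hence the continuation of the algorithm is distributed as a fresh run of ${\rm AMS}_{\rm expo}(n,k;a,y)$, which yields the identity in distribution $\log \hat{p}^{n,k}(x) = \log(1-k/n) + \log \hat{p}^{n,k}(y)$ given $Z^1=y$. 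Integrating against the density $f_{n,k}(\cdot;x)$ of $Z^1$ on $[x,a)$ then produces the integral term in \eqref{eq:eq_funct}, and adding the two contributions completes the proof.

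The only subtle point is the Markov-type property itself: although intuitive, rigorously justifying that the ordered sample conditioned on its $k$-th order statistic has the claimed product structure requires an explicit computation with joint order-statistic densities, together with the independence of the freshly resampled replicas from the survivors. I plan to appeal to the detailed treatment of this step in \cite{BLR}, in line with the paper's stated convention of referring to that reference for such standard manipulations.
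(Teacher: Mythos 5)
Your proposal is correct and follows essentially the same route as the paper: decompose according to $J^{n,k}(x)=0$ versus $J^{n,k}(x)\geq 1$, recover $\Theta_{n,k}(\lambda;x)$ from the order-statistics identity $\PP(\text{exactly } \ell \text{ of the } X_i^0 < a) = F_{n,\ell}(a;x)-F_{n,\ell+1}(a;x)$ on the terminating branch, and use the Markov-type restart property (conditionally on $Z^1=y$, the configuration $X^1$ is i.i.d.\ $\mathcal{L}(X\mid X>y)$, so $\hat p^{n,k}(x) \stackrel{d}{=} (1-k/n)\,\hat p^{n,k}(y)$) on the continuing branch, integrating against the density $f_{n,k}(\cdot;x)$ of $Z^1$. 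You give slightly more detail than the paper on why the Markov property holds (splitting survivors and resampled replicas), and like the paper you defer the rigorous order-statistics computation to \cite{BLR}, which matches the paper's own convention.
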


Notice that for the moment, it is not cleat that $\Gamma_{n,k}$ is the unique solution of the functional equation \eqref{eq:eq_funct}. We will prove this property below.

For completeness, we include a proof of this result, even if follows the same lines as Proposition $4.2$ in \cite{BLR}.
\begin{proof}[Proof of Propositon \ref{propo:eq_funct}]
We decompose the expectation according to the value of the (random) number of iterations $J^{n,k}(x)$  in the algorithm starting from $x$:
\begin{align*}
\Gamma_{n,k}(\lambda;x)&=\E \Bigl[\exp \bigl(n\lambda \log(\hat{p}^{n,k}(x)) \bigl) \Bigr]\\
&=\E \Bigl[\exp \bigl(n\lambda \log(\hat{p}^{n,k}(x)) \bigl) \mathds{1}_{J^{n,k}(x)=0}\Bigr]+\E \Bigl[\exp \bigl(n\lambda \log(\hat{p}^{n,k}(x)) \bigl) \mathds{1}_{J^{n,k}(x)\ge 1}\Bigr].
\end{align*}
First, since $\left\{J^{n,k}(x)=0\right\}=\left\{ Z^1\ge a\right\} = \bigcup_{\ell=0}^{k-1}\left\{ X_{(\ell+1)}\ge a > X_{(\ell)}\right\}$, we have
\begin{align*}
\E \Bigl[\exp \bigl(n\lambda \log(\hat{p}^{n,k}(x)) \bigl) \mathds{1}_{J^{n,k}(x)=0}\Bigr]&=\E \Bigl[\exp \bigl(n\lambda \log(C^{n,k}(x)) \bigl) \mathds{1}_{J^{n,k}(x)=0}\Bigr]\\
&=\sum_{\ell=0}^{k-1}\exp\Bigl(n\lambda\log(1-\frac{\ell}{n})\Bigr)\Bigl(F_{n,\ell}(a;x)-F_{n,\ell+1}(a;x) \Bigr)\\
&=\Theta_{n,k}(\lambda;x).
\end{align*}
Second, we use $\left\{J^{n,k}(x)\ge 1\right\}=\left\{ Z^1\le a\right\}$ and condition with respect to $Z^1$:
\begin{align*}
\E \Bigl[\exp \bigl(n\lambda &\log(\hat{p}^{n,k}(x)) \bigl) \mathds{1}_{J^{n,k}(x)\ge 1}\Bigr]=\E \Bigl[ \E\bigl[ \exp \bigl(n\lambda \log(\hat{p}^{n,k}(x)) \bigl) \big| Z^1 \bigr] \mathds{1}_{Z^1< a}\Bigr]\\
&=\E \Bigl[ \E\bigl[ \exp \bigl(n\lambda \log((1-k/n)^{J^{n,k}(x)-1}C^{n,k}(x)) +n\lambda\log(1-k/n)  \bigl) \big| Z^1 \bigr] \mathds{1}_{Z^1< a}\Bigr]\\
&=\exp\Bigl(n\lambda\log(1-\frac{k}{n})\Bigr)\E \Bigl[ \E\bigl[ \exp \bigl(n\lambda \log((1-k/n)^{J^{n,k}(Z^1)}C^{n,k}(Z^1))  \bigl) \big| Z^1 \bigr] \mathds{1}_{Z^1< a}\Bigr]\\
&=\exp\Bigl(n\lambda\log(1-\frac{k}{n})\Bigr)\E \Bigl[ \Gamma_{n,k}(Z^1;x) \mathds{1}_{Z^1< a}\Bigr]\\
&=\int_{x}^{a}  \exp\Bigl(n\lambda\log(1-\frac{k}{n})\Bigr)   \Gamma_{n,k}(\lambda;y)f_{n,k}(y;x)dy.
\end{align*}
We have used a kind of Markov property for the algorithm: up to taking into account for one more iteration, the algorithm behaves the same starting from $x$ or from $Z^1\in (x,a]$.
\end{proof}

Notice that the functional equation \eqref{eq:eq_funct} involves a simple factor depending only on $\lambda$, $n$ and $k$ in the integral, and that on both the left and the right-hand sides the function $\Gamma$ is evaluated at the same value of the parameter $\lambda$. These observations are consequences of the choice to prove a Large Deviations Principle for $\log(\hat{p}^{n,k})$ (instead of $\hat{p}^{n,k}$) thanks to the G\"artner-Ellis Theorem, and to conclude with the use of the contraction principle; the same trick was used in \cite{BGT} to prove the Central Limit Theorem, thanks to the delta-method and the use of Levy Theorem. If one replaces $\log(\hat{p}^{n,k}(x))$ with $\hat{p}^{n,k}(x)$ in \eqref{eq:def_Gamma}, then one obtains a more complicated functional equation where the observations above do not hold, and which is not easily exploitable. In particular, one does not obtain a nice counterpart of the fundamental result, Proposition \ref{propo:EDO} below.

We now state in Proposition \ref{propo:EDO} that solutions $\Gamma$ of the functional equation \eqref{eq:eq_funct} are in fact solutions of a linear Ordinary Differential Equation (ODE) of order $k$, with constant coefficients.
\begin{propo}\label{propo:EDO}
For any $n\in\N^*$, $k\in\left\{1,\ldots,n-1\right\}$, and $\lambda\in \R$, let $\Gamma$ be a solution of the functional equation \eqref{eq:eq_funct}. Then it is solution of the following linear ODE of order $k$:
\begin{equation}\label{eq:ODE}
\frac{d^k}{dx^k}\Gamma_{n,k}(\lambda;x)=\exp\Bigl(n\lambda\log(1-\frac{k}{n})\Bigr)\mu^{n,k}\Gamma_{n,k}(\lambda;x)+\sum_{m=0}^{k-1}r_{m}^{n,k}\frac{d^m}{dx^m}\Gamma_{n,k}(\lambda;x).
\end{equation}
The coefficients $\mu^{n,k}$ and $(r_{m}^{n,k})_{0\leq m\leq k-1}$ satisfy the following properties:
\begin{equation}\label{eq:rec_coeffs}
\begin{gathered}
\mu^{n,k}=(-1)^{k}n\ldots (n-k+1)\\
\nu^k-\sum_{m=0}^{k-1}r_{m}^{n,k} \, \nu^m=(\nu-n)\ldots (\nu-n+k-1) \quad \text{ for all } \nu\in\R.
\end{gathered}
\end{equation}
\end{propo}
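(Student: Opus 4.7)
The plan is to exploit the exponential structure of \eqref{eq:eq_funct} to recast the integral equation as a finite sum of first-order ``primitive'' contributions, and then hit the whole identity with a carefully chosen $k$-th order linear differential operator that simultaneously annihilates the inhomogeneous term $\Theta_{n,k}(\lambda;\cdot)$ and collapses the integral part to a scalar multiple of $\Gamma$.

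The first step is to substitute $f(y;x)=e^{-(y-x)}$, $F(y;x)=1-e^{-(y-x)}$ in the density $f_{n,k}(y;x)$ and expand $(1-e^{-(y-x)})^{k-1}$ by the binomial theorem. Setting $\beta_j=n-k+1+j$ for $j\in\{0,\ldots,k-1\}$ and $c=\exp(n\lambda\log(1-k/n))$, this rewrites \eqref{eq:eq_funct} as
\[
\Gamma(x)=c\cdot k\binom{n}{k}\sum_{j=0}^{k-1}\binom{k-1}{j}(-1)^{j}H_{j}(x)+\Theta_{n,k}(\lambda;x),
\]
where $H_{j}(x)=e^{\beta_{j}x}\int_{x}^{a}\Gamma(y)e^{-\beta_{j}y}\,dy$. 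The point is that each $H_{j}$ satisfies the first-order identity $(D-\beta_{j})H_{j}=-\Gamma$ with $D=d/dx$. This forces the choice of the $k$-th order operator $L=\prod_{j=0}^{k-1}(D-\beta_{j})$, whose characteristic polynomial $\prod_{j=0}^{k-1}(\nu-\beta_{j})=(\nu-n)(\nu-n+1)\cdots(\nu-n+k-1)$ is precisely the one prescribed by the second identity of \eqref{eq:rec_coeffs}, so that $L$ already carries the correct $r_{m}^{n,k}$'s.

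Next I would check that $L\Theta_{n,k}(\lambda;\cdot)=0$. Returning to the probabilistic interpretation used in the proof of Proposition \ref{propo:eq_funct}, one can write
\[
\Theta_{n,k}(\lambda;x)=\sum_{\ell=0}^{k-1}\exp\bigl(n\lambda\log(1-\tfrac{\ell}{n})\bigr)\binom{n}{\ell}(1-e^{x-a})^{\ell}e^{(n-\ell)(x-a)},
\]
which, once expanded, is a linear combination of the exponentials $e^{m(x-a)}$ for $m\in\{n-k+1,\ldots,n\}=\{\beta_{0},\ldots,\beta_{k-1}\}$; each such exponential is killed by the corresponding factor $D-\beta_{j}$. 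Applying $L$ to the rewritten functional equation, using commutativity of the factors and $(D-\beta_{j})H_{j}=-\Gamma$, then yields
\[
L\Gamma=-c\cdot k\binom{n}{k}\,Q(D)\,\Gamma,\qquad Q(\nu):=\sum_{j=0}^{k-1}\binom{k-1}{j}(-1)^{j}\prod_{i\neq j}(\nu-\beta_{i}).
\]

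The main obstacle — and the step where the explicit value $\mu^{n,k}=(-1)^{k}n(n-1)\cdots(n-k+1)$ must emerge — is to show that $Q$, which a priori has degree at most $k-1$, is actually a \emph{constant} polynomial; otherwise the resulting ODE would mix $\lambda$-dependent factors into the derivative terms and not match \eqref{eq:ODE}. The plan is a short interpolation argument: evaluate $Q$ at each $\nu=\beta_{\ell}$, where only the $j=\ell$ summand survives; using $\beta_{\ell}-\beta_{i}=\ell-i$ one computes $\prod_{i\neq\ell}(\beta_{\ell}-\beta_{i})=\ell!\,(-1)^{k-1-\ell}(k-1-\ell)!$, and the binomial factor cancels it to give $Q(\beta_{\ell})=(-1)^{k-1}(k-1)!$ independently of $\ell$. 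A polynomial of degree $\leq k-1$ taking the same value at $k$ distinct points is that constant, hence $Q\equiv(-1)^{k-1}(k-1)!$ and $L\Gamma=c\cdot(-1)^{k}k!\binom{n}{k}\,\Gamma=c\,\mu^{n,k}\,\Gamma$. Expanding $L$ and rearranging then gives \eqref{eq:ODE}, with the coefficients $(r_{m}^{n,k})$ identified by \eqref{eq:rec_coeffs} from the choice of $L$.
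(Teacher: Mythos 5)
Your argument is correct, and it is genuinely different from the paper's proof. The paper proceeds by differentiating the functional equation \eqref{eq:eq_funct} recursively in $x$: at step $l$ the integral term reappears with kernel $f_{n,k-l}(\cdot;x)$, the coefficients $\mu_l^{n,k}$, $r_{m,l}^{n,k}$ are generated by the recursion \eqref{eq:recursion}, and the closed-form identities \eqref{eq:rec_coeffs} are not re-proved but quoted from \cite{BLR}; the annihilation of $\Theta_{n,k}(\lambda;\cdot)$ is obtained, as in your proof, from its decomposition over the exponentials $e^{m(x-a)}$, $m\in\{n-k+1,\dots,n\}$. You instead expand the kernel itself, $f_{n,k}(y;x)=k\binom{n}{k}\sum_{j=0}^{k-1}\binom{k-1}{j}(-1)^j e^{-\beta_j(y-x)}$ with $\beta_j=n-k+1+j$, reduce the integral to the resolvent-type functions $H_j$ satisfying $(D-\beta_j)H_j=-\Gamma$, and apply the operator $L=\prod_j(D-\beta_j)$ whose characteristic polynomial is by construction the one in \eqref{eq:rec_coeffs}; the only nontrivial point, that $Q(\nu)=\sum_j\binom{k-1}{j}(-1)^j\prod_{i\neq j}(\nu-\beta_i)$ is the constant $(-1)^{k-1}(k-1)!$, is settled by your evaluation at the $k$ points $\beta_\ell$, and this is exactly what produces $\mu^{n,k}=(-1)^k n\cdots(n-k+1)$. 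What your route buys is self-containedness: both identities in \eqref{eq:rec_coeffs} come out of the construction and a short Lagrange-interpolation computation, with no recursive bookkeeping and no appeal to \cite{BLR}. What the paper's route buys is the intermediate identity \eqref{eq:recur_p}, which is reused verbatim at the start of the proof of Lemma \ref{lemme:derivs} to get the equality of the derivatives of $\Gamma_{n,k}$ and $\Theta_{n,k}$ at $x=a$; with your approach that equality would need a separate (easy) remark, e.g.\ that $H_j$ and its derivatives up to order $k-1$ reduce at $x=a$ to combinations of lower-order data vanishing appropriately. One small point you should make explicit in either approach: applying $\prod_{i\neq j}(D-\beta_i)$ to $\Gamma$ requires $\Gamma\in\mathcal{C}^{k-1}$, which follows by bootstrapping regularity from \eqref{eq:eq_funct} (the right-hand side is one degree smoother than $\Gamma$); the paper's recursive differentiation uses the same implicit step.
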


A sketch of proof of this result is postponed to Section \ref{sect:detailed_proof}. It uses the same arguments as to prove the corresponding functional equation in \cite{BLR}. For the proof of \eqref{eq:rec_coeffs} in particular, we refer to that article.

To conclude on uniqueness of the solution of \eqref{eq:eq_funct}, and then prove asymptotic expansions on $\Gamma_{n,k}$, we prove the following Lemma.

\begin{lemme}\label{lemme:derivs}
For any fixed $k\in\left\{1,\ldots,\right\}$ and any $\lambda\in\R$, we have for any $m\in\left\{0,\ldots,k-1\right\}$
\begin{eqnarray}\label{eq:init_cond}
\frac{d^{m}}{dx^{m}}\Gamma_{n,k}(\lambda;x) \Big|_{x=a}&=\frac{d^{m}}{dx^{m}}\Theta_{n,k}(\lambda;x) \Big|_{x=a}\\
&\underset{n \to \infty}\sim n^{m}\bigl(1-\exp(-\lambda)\bigr)^m. \nonumber
\end{eqnarray}
\end{lemme}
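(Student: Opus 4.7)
The plan is to prove the two assertions separately: first, the identity between derivatives of $\Gamma_{n,k}$ and $\Theta_{n,k}$ at $x=a$, using the functional equation \eqref{eq:eq_funct}, and then the asymptotic, using the explicit form \eqref{eq:def_Theta}. Throughout I work in the exponential setting, where $f_{n,k}(y;x) = g_{n,k}(y-x)$ with $g_{n,k}(u) = k\binom{n}{k}(1-e^{-u})^{k-1}e^{-(n-k+1)u}$; the essential feature is that $g_{n,k}(u) \sim k\binom{n}{k}\,u^{k-1}$ near $u=0$, and hence $g_{n,k}^{(j)}(0) = 0$ for $j=0,\ldots,k-2$.

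For the first equality I apply $\partial_x^m$ to \eqref{eq:eq_funct}; this is legitimate since by Proposition \ref{propo:EDO} the function $\Gamma_{n,k}(\lambda;\cdot)$ is smooth on $[0,a]$. Writing the integral term as $\Phi(x) = \int_x^a \Gamma_{n,k}(\lambda;y)\,g_{n,k}(y-x)\,dy$, an induction on $m$ shows that for $m \leq k-1$,
$$\Phi^{(m)}(x) = (-1)^m \int_x^a \Gamma_{n,k}(\lambda;y)\,g_{n,k}^{(m)}(y-x)\,dy,$$
because the boundary term produced at $y=x$ at the $j$-th differentiation is proportional to $g_{n,k}^{(j-1)}(0)$, which vanishes for $j \leq k-1$. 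Evaluating at $x=a$ the remaining integral is zero, so $\Phi^{(m)}(a) = 0$, and differentiating \eqref{eq:eq_funct} at $x=a$ yields $\Gamma_{n,k}^{(m)}(\lambda;a) = \Theta_{n,k}^{(m)}(\lambda;a)$ for $m = 0,\ldots,k-1$.

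For the asymptotic I use $F_{n,\ell}(a;x) - F_{n,\ell+1}(a;x) = \binom{n}{\ell}q(x)^\ell(1-q(x))^{n-\ell}$ with $q(x) = 1-e^{-(a-x)}$, so
$$\Theta_{n,k}(\lambda;x) = \sum_{\ell=0}^{k-1} c_\ell^n \binom{n}{\ell}\,q(x)^\ell (1-q(x))^{n-\ell}, \qquad c_\ell^n = \exp\bigl(n\lambda\log(1-\ell/n)\bigr).$$
Changing variables to $t=a-x$ (so $\partial_x|_{x=a}$ becomes $-\partial_t|_{t=0}$), the problem reduces to computing $\partial_t^m|_{t=0}$ of the function $u_\ell(t) := (1-e^{-t})^\ell e^{-(n-\ell)t} = \sum_{j=0}^\ell \binom{\ell}{j}(-1)^{\ell-j}e^{-(n-j)t}$. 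Direct computation gives $u_\ell^{(m)}(0) = (-1)^m \sum_{j=0}^\ell \binom{\ell}{j}(-1)^{\ell-j}(n-j)^m$, which is a finite difference of the polynomial $j \mapsto (n-j)^m$ of degree $m$; this vanishes for $\ell > m$, and for $\ell \leq m$ is equivalent to $(-1)^{m+\ell}\frac{m!}{(m-\ell)!}\,n^{m-\ell}$ as $n \to \infty$. Using $\binom{n}{\ell} \sim n^\ell/\ell!$ and $c_\ell^n \to e^{-\ell\lambda}$ for fixed $\ell$, the binomial theorem collapses the sum over $\ell \in \{0,\ldots,m\}$ to $(-1)^m n^m (1-e^{-\lambda})^m$; combining with the extra sign coming from $\partial_x = -\partial_t$ gives the announced equivalent $n^m(1-e^{-\lambda})^m$.

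The main obstacle is the bookkeeping in the first step: at each differentiation one has to track the boundary term produced by Leibniz's rule and verify that it cancels against the vanishing of the appropriate derivative of $g_{n,k}$ at $0$. The order of vanishing of $g_{n,k}$ at $0$ matches exactly the number of differentiations allowed by the statement, which is why only derivatives of order at most $k-1$ enjoy the clean identity $\Gamma_{n,k}^{(m)}(\lambda;a) = \Theta_{n,k}^{(m)}(\lambda;a)$; this matching lies at the heart of the argument, while the asymptotic in the second step is essentially a routine computation once the finite-difference identity is recognized.
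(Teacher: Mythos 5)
Your proof is correct and reaches the same two intermediate facts as the paper, but by a somewhat different route. For the equality $\frac{d^m}{dx^m}\Gamma_{n,k}(\lambda;a)=\frac{d^m}{dx^m}\Theta_{n,k}(\lambda;a)$, you differentiate the functional equation directly, observing that $g_{n,k}=f_{n,k}(\cdot+x;x)$ vanishes to order $k-1$ at the origin so that the boundary terms in the first $k-1$ Leibniz differentiations of the integral $\Phi$ disappear; the paper instead appeals to the recursion \eqref{eq:recur_p} established in the proof of Proposition~\ref{propo:EDO}, which encodes exactly the same differentiations with explicit coefficients $\mu_l^{n,k}$ and $r_{m,l}^{n,k}$. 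Both yield $\Phi^{(m)}(a)=0$ for $m\le k-1$; your formulation isolates the key structural reason (order of vanishing of the kernel matching the allowed number of derivatives), which is a nice observation. For the asymptotic, the paper proves the intermediate estimate \eqref{eq:proof_derivs}, namely $\partial_x^m\bigl(F_{n,\ell}(a;\cdot)-F_{n,\ell+1}(a;\cdot)\bigr)\big|_{x=a}\sim n^m\binom{m}{\ell}(-1)^\ell$, by induction on $m$ via the derivative identities \eqref{formula:d/dxk}; you instead expand $(1-e^{-t})^\ell e^{-(n-\ell)t}$ by the binomial theorem and recognize the result as an $\ell$-th forward difference of the degree-$m$ polynomial $j\mapsto(n-j)^m$, which vanishes for $\ell>m$ and has leading term $(-1)^\ell\,m!/(m-\ell)!\,n^{m-\ell}$ otherwise. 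This closed-form computation is a legitimate and arguably cleaner alternative to the paper's induction. Two minor caveats: invoking Proposition~\ref{propo:EDO} for smoothness of $\Gamma_{n,k}$ is slightly circular (the proposition is proved by differentiating the functional equation, which already presupposes regularity; the honest argument is a bootstrap from boundedness of $\Gamma_{n,k}$ and smoothness of the kernel, as the paper implicitly does), and the equivalent $n^m(1-e^{-\lambda})^m$ degenerates to $0\sim 0$ when $\lambda=0$ and $m\ge 1$ — a harmless edge case present in the paper's statement as well, where in fact $\Gamma_{n,k}(0;\cdot)\equiv 1$ so the derivatives are exactly zero. Neither affects the validity of your argument.
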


By Cauchy-Lipschitz theory, the linear ODE \eqref{eq:ODE} with the conditions \eqref{eq:init_cond} at $x=a$ admits a unique solution; therefore it is clear that $\Gamma_{n,k}$ is the unique solution of \eqref{eq:eq_funct}.

\begin{rem}
To prove the Central Limit Theorem in \cite{BGT}, we used a similar result although in a weaker form: we only needed to prove $\frac{d^{m}}{dx^{m}}\Theta_{n,k}(\lambda;x) \Big|_{x=a}=\text{O}(n^m)$. Here we require a more precise asymptotic result in order to prove that the coefficient $\gamma_{n,k}^{1}(\lambda)$ defined in Proposition \ref{propo:EDO_resol} below converges to $1$ (in fact, we only need that it is bounded from below by a positive constant).
\end{rem}

We finally explain how to obtain asymptotic knowledge on $\Gamma_{n,k}(\lambda;x)$ and $\Lambda_{n,k}(n\lambda,x)$ when $n\rightarrow +\infty$. First, the $k$ roots $\bigl(\nu_{n,k}^{\ell}(\lambda)\bigr)_{1\leq \ell\leq k}$ of the polynomial equation associated with the linear ODE \eqref{eq:ODE} are pairwise distinct for $n$ large enough (the other parameters $\lambda$ and $k$ being fixed), and more precisely they satisfy \eqref{eq:roots}. As a consequence, the solution $\Gamma_{n,k}$ can be written (see \eqref{eq:sol_ODE}) as a linear combination of exponential functions $x\mapsto \exp\Bigl({\nu_{n,k}^{\ell}(\lambda)\left(x-a\right)}\Bigr)$. Finally, using the asymptotic expression for the derivatives of order $0,\ldots,k-1$ at $x=a$, we obtain a linear system of equations, solve it using the Cramer's formulae and obtain the asymptotic expression \eqref{eq:coeffs}. The proof is postponed to Section \ref{sect:detailed_proof}.

\begin{propo}\label{propo:EDO_resol}
Let $k\in\left\{1,\ldots,\right\}$ and $\lambda\in\R$ be fixed. Then for $n$ large enough, we have for any $x\in[0,a]$
\begin{equation}\label{eq:sol_ODE}
\Gamma_{n,k}(\lambda,x) = \sum_{\ell=1}^{k} \gamma_{n,k}^{\ell}(\lambda) \exp\Bigl({\nu_{n,k}^{\ell}(\lambda)\left(x-a\right)}\Bigr),
\end{equation}
where
\begin{equation}\label{eq:roots}
\nu_{n,k}^{\ell}(\lambda)\sim n\Bigl(1-e^{-\lambda}e^{i2\pi \frac{(\ell-1)}{k}}\Bigr)
\end{equation}
and
\begin{equation}\label{eq:coeffs}
\gamma_{n,k}^{\ell}(\lambda)\rightarrow \mathds{1}_{\ell=1}.
\end{equation}
\end{propo}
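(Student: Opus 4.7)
The plan is to exploit the explicit order-$k$ linear ODE furnished by Proposition~\ref{propo:EDO}, combined with the boundary data at $x=a$ from Lemma~\ref{lemme:derivs}, and then pass to the limit in a Vandermonde-type linear system.

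First I would analyse the characteristic polynomial of \eqref{eq:ODE}. By \eqref{eq:rec_coeffs} it equals
$$P_{n,k,\lambda}(\nu)=(\nu-n)(\nu-n+1)\cdots(\nu-n+k-1)-(-1)^{k}n(n-1)\cdots(n-k+1)\exp\bigl(n\lambda\log(1-k/n)\bigr).$$
Substituting $\tilde{\nu}=\nu/n$ and dividing by $n^{k}$, and using the convergences $n\lambda\log(1-k/n)\to -k\lambda$ and $n(n-1)\cdots(n-k+1)/n^{k}\to 1$, the rescaled polynomial tends uniformly on compacts to $(\tilde{\nu}-1)^{k}-(-e^{-\lambda})^{k}$, whose $k$ roots are the pairwise distinct complex numbers $\tilde{\nu}_{\ell}=1-e^{-\lambda}e^{i2\pi(\ell-1)/k}$. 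Continuity of polynomial roots with respect to coefficients then yields \eqref{eq:roots}, after labelling the roots so that $\nu_{n,k}^{\ell}(\lambda)/n\to\tilde{\nu}_{\ell}$; moreover, for $n$ large the $\nu_{n,k}^{\ell}(\lambda)$ are pairwise distinct.

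Since \eqref{eq:ODE} is a linear ODE of order $k$ with constant coefficients and $k$ distinct characteristic roots, its solution space is spanned by the exponentials $x\mapsto\exp(\nu_{n,k}^{\ell}(\lambda)(x-a))$, so $\Gamma_{n,k}(\lambda;\cdot)$ admits the expansion \eqref{eq:sol_ODE} with uniquely determined coefficients $\gamma_{n,k}^{\ell}(\lambda)$. To identify them I would differentiate \eqref{eq:sol_ODE} $m$ times and evaluate at $x=a$, obtaining the $k\times k$ linear system
$$\sum_{\ell=1}^{k}\gamma_{n,k}^{\ell}(\lambda)\bigl(\nu_{n,k}^{\ell}(\lambda)\bigr)^{m}=\frac{d^{m}}{dx^{m}}\Gamma_{n,k}(\lambda;x)\Big|_{x=a},\qquad m=0,\ldots,k-1.$$
Dividing row $m$ by $n^{m}$ and invoking Lemma~\ref{lemme:derivs}, the right-hand side converges to $(1-e^{-\lambda})^{m}=\tilde{\nu}_{1}^{m}$, while the coefficient of $\gamma_{n,k}^{\ell}$ converges to $\tilde{\nu}_{\ell}^{m}$. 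The limiting system is the Vandermonde system $\sum_{\ell}\gamma^{\ell}\tilde{\nu}_{\ell}^{m}=\tilde{\nu}_{1}^{m}$ for $m=0,\ldots,k-1$, whose obvious unique solution is $\gamma^{\ell}=\mathds{1}_{\ell=1}$.

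The main obstacle is to transfer this limiting identification to the finite-$n$ coefficients. I would apply Cramer's formula, which expresses $\gamma_{n,k}^{\ell}(\lambda)$ as the ratio of two determinants that depend continuously on $(\nu_{n,k}^{\ell}(\lambda)/n)_{\ell}$ and on the normalized right-hand sides. The denominator is a perturbed Vandermonde determinant that tends to $\prod_{\ell<\ell'}(\tilde{\nu}_{\ell'}-\tilde{\nu}_{\ell})\neq 0$ by distinctness of the limit roots, hence stays bounded away from $0$ for large $n$; passing to the limit in the ratios then yields \eqref{eq:coeffs}. This step crucially requires the sharp equivalent in Lemma~\ref{lemme:derivs} (rather than just an $O(n^{m})$ bound as was sufficient in \cite{BGT}), which is exactly what pins down the leading coefficient $\gamma_{n,k}^{1}(\lambda)\to 1$.
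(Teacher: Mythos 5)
Your proposal is correct and follows essentially the same route as the paper: identify the characteristic polynomial via \eqref{eq:rec_coeffs}, rescale $\nu/n$ and use continuity of roots to get \eqref{eq:roots} and pairwise distinctness for large $n$, expand $\Gamma_{n,k}(\lambda;\cdot)$ in the corresponding exponentials, and pass to the limit in the normalized Vandermonde system at $x=a$ (using the sharp equivalents of Lemma \ref{lemme:derivs} and Cramer's formulae with a nondegenerate limiting Vandermonde determinant) to obtain \eqref{eq:coeffs}. No substantive difference from the paper's argument.
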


We now conclude and prove Proposition \ref{propo:LDP_log}, namely the Large Deviations Principle for $\bigl(\mathcal{L}(\log(\hat{p}^{n,k}))\bigr)_{n> k}$.

We start with the case $k>1$. Then for any $\ell\in\left\{2,\ldots,k\right\}$ we have for any $\lambda\in\R$
$$\text{Re}\Bigl(1-e^{-\lambda}e^{i2\pi (\ell-1)/k}\Bigr)>\text{Re}\Bigl(1-e^{-\lambda}\Bigr).$$
As a consequence, for $x<a$ we have when $n\rightarrow +\infty$
$$e^{\nu_{n,k}^{\ell}(\lambda)\left(x-a\right)}={\rm o}\Bigl(e^{1-\exp(-\lambda))\left(x-a\right)} \Bigr),$$
and thus
$$
\frac{1}{n}\Lambda_{n,k}(n\lambda;x)=\frac{1}{n}\log(\Gamma_{n,k}(\lambda;x))\underset{n\rightarrow +\infty}\sim \nu_{n,k}^{1}(\lambda)(x-a)\underset{n\rightarrow +\infty}\rightarrow (1-e^{-\lambda})(x-a).
$$

When $k=1$, the linear ODE \eqref{eq:ODE} is of order $1$, and it is easy to check that
$$\Gamma_{n,1}(\lambda;x)=\exp\Bigl(\nu_{n,k}^{1}(\lambda)(x-a) \Bigr),$$
so that the same asymptotic result as above holds.

It remains to take $x=a$, and to recall that $a=-\log(p)$ if $p=\PP(X>a)$ and $X$ is exponentially distributed with parameter $1$.

This concludes the proof of Proposition \ref{propo:LDP_log}.

\section{Comparison with other algorithms}\label{sect:comp}

We propose a comparison (in terms of large deviations) of the Adaptive Multilevel Splitting algorithm with the two other methods described in the Introduction: a direct, naive Monte-Carlo method, based on a non-interacting system of replicas with the same size (see the estimator \eqref{eq:cMC_intro}) , and a non-adaptive version of multilevel splitting (see the estimator \eqref{eq:estim_MS_intro}).

In the first case, we obtain that large deviations are much less likely for the AMS algorithm than for the crude Monte-Carlo method. In the second case, we show that the AMS estimator is more efficient than the non-adaptive one taken in the limit of a large number $N$ of fixed levels.

These results are consistent with the cost analysis and the comparison based on the central limit theorem, see \cite{BLR}, \cite{BGT}, \cite{CerouDel-MoralFuronGuyader2012}, \cite{CerouGuyader2014}.

\subsection{Crude Monte-Carlo}

We compare the performance of the AMS algorithm with the use of a Crude Monte-Carlo estimation in the large $n$ limit.

Let $(X_m)_{m\in\N^*}$ a sequence of independent and identically distributed random variables, each one being equal in law with $X$.

Then for any $n\in\N^*$
\begin{equation}
\overline{p}_n=\frac{1}{n}\sum_{m=1}^{n}\mathds{1}_{X_m>a}
\end{equation}
is an unbiased estimator of $p$.

It is a classical result (Theorem $2.2.3$ in \cite{DZ}) due to Cramer that the sequence $\bigl(\mathcal{L}(\overline{p}_n)\bigr)_{n\in\N^*}$ satisfies a Large Deviations Principle with the rate function (case of Bernoulli random variables, see Exercice $2.2.23$ in \cite{DZ}):
\begin{equation}\label{eq:def_Ical}
\mathcal{I}(y)=\begin{cases} +\infty \text{ if } y\notin(0,1) \\ y\log\left(\frac{y}{p}\right)+(1-y)\log\left(\frac{1-y}{1-p}\right) \text{ if } y\in(0,1).\end{cases}
\end{equation}

The comparison between the algorithms is based on the following result:
\begin{propo}
For any $p\in(0,1)$ and any $y\in(0,1)$, we have
\begin{gather*}
I(y) \ge \mathcal{I}(y),\\
I(y)=\mathcal{I}(y) \quad \text{ if and only if } \quad y=p.
\end{gather*}
\end{propo}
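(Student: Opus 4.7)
The plan is to recognize both rate functions as Kullback--Leibler divergences between natural distribution families, and then conclude via the data processing inequality.

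First, by direct inspection of \eqref{eq:def_Ical}, $\mathcal{I}(y)$ is the relative entropy $H(\mathrm{Ber}(y)\,\|\,\mathrm{Ber}(p))$ between the Bernoulli distributions with parameters $y$ and $p$. For $I(y)$, I would apply the change of variables $\mu=-\log y$, $\nu=-\log p$, both positive. Substituting into \eqref{eq:rate_I} gives $I(y)=\mu\log(\mu/\nu)+\nu-\mu$, which is exactly the relative entropy $H(\mathcal{P}(\mu)\,\|\,\mathcal{P}(\nu))$ between the Poisson distributions with means $-\log y$ and $-\log p$. This identification is very much in the spirit of the paragraph following Proposition~\ref{propo:Lambda}, where $J$ is already interpreted as a Cram\'er rate function for a Poisson-like random variable.

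Next, I would introduce the deterministic map $T\colon \N\to\{0,1\}$ defined by $T(k)=\mathds{1}_{k=0}$. Since $\PP(\mathcal{P}(\mu)=0)=e^{-\mu}$, the pushforward $T_*\mathcal{P}(-\log y)$ is the Bernoulli law with parameter $y$, and similarly for $p$. The data processing inequality, applied to the deterministic channel $T$, then yields
\[
\mathcal{I}(y)=H\bigl(T_*\mathcal{P}(-\log y)\,\|\,T_*\mathcal{P}(-\log p)\bigr)\le H\bigl(\mathcal{P}(-\log y)\,\|\,\mathcal{P}(-\log p)\bigr)=I(y),
\]
which is the non-strict inequality.

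For the equality case, I would use the well-known criterion that DPI is saturated if and only if the Radon--Nikodym derivative $d\mathcal{P}(-\log y)/d\mathcal{P}(-\log p)$ is $\sigma(T)$-measurable. At the integer $k$, this derivative equals $(y/p)(\log y/\log p)^k$, which is constant on $\{k\ge 1\}$ only when $\log y/\log p=1$, i.e.\ $y=p$. Hence strict inequality holds at every $y\in(0,1)\setminus\{p\}$.

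The main obstacle in this information-theoretic route is essentially the bookkeeping for the equality condition. An alternative, self-contained approach would study $\phi(y)=I(y)-\mathcal{I}(y)$ on $(0,1)$ directly: one first notes $\phi(p)=\phi'(p)=0$ from the explicit formulas for $I'$ and $\mathcal{I}'$, then argues that $y=p$ is the unique critical point of $\phi$ in $(0,1)$, and finally uses the boundary behaviour $\phi(0^+)=+\infty$ and $\phi(1^-)=0$ to conclude the sign of $\phi$. This elementary route is substantially heavier because $\phi$ fails to be globally convex on $(0,1)$---direct inspection near $y=1$ shows $\phi''$ blows up to $-\infty$---so one cannot appeal to a one-step convexity argument.
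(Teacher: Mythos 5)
Your proof is correct, but it follows a genuinely different route from the paper. The paper's argument is a direct calculus computation: it fixes $y$, regards the difference $D(y,p)=I(y)-\mathcal{I}(y)$ as a function of the parameter $p$, computes $\frac{\partial D}{\partial p}=\frac{1-y}{p\log(p)}\bigl(\frac{\log(y)}{1-y}-\frac{\log(p)}{1-p}\bigr)$, determines its sign via the monotonicity of $t\mapsto\frac{\log t}{1-t}$ on $(0,1)$, and concludes from $D(p,p)=0$ that $p\mapsto D(y,p)$ is minimized exactly at $p=y$. You instead identify $\mathcal{I}(y)$ and $I(y)$ as the relative entropies $H(\mathrm{Ber}(y)\,\|\,\mathrm{Ber}(p))$ and $H(\mathcal{P}(-\log y)\,\|\,\mathcal{P}(-\log p))$ (the latter identification is consistent with the paper's own Cram\'er/Poisson interpretation of $J$), and you obtain the inequality from the data-processing inequality applied to the deterministic coarsening $k\mapsto\mathds{1}_{k=0}$, with the equality case settled by the sufficiency criterion: the likelihood ratio $(y/p)(\log y/\log p)^k$ is constant on $\{k\geq 1\}$ only when $y=p$ (all measures here are discrete with full support and the divergences are finite, so the criterion applies without measure-theoretic caveats). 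What each approach buys: the paper's proof is short and self-contained but is pure sign bookkeeping (and indeed its stated monotonicity of $t\mapsto\frac{\log t}{1-t}$ has the direction reversed, although the correct direction still yields the claimed conclusion), whereas your argument is structurally transparent -- it explains the inequality as "the Bernoulli observation is a coarsening of the Poisson one" -- and it transfers to other comparisons of rate functions of this type. Your closing remark about the alternative elementary study of $\phi=I-\mathcal{I}$ in the $y$ variable is only a sketch (uniqueness of the critical point is not established there), but since your main DPI argument is complete, that aside does not affect the validity of the proof.
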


\begin{proof}
We explicitly mention the dependence of $I$ and of $\mathcal{I}$ with respect to $p$, and we define
$$D(y,p)=I(y,p)-\mathcal{I}(y,p).$$
It is clear that $D(p,p)=0$ for any $p\in (0,1)$.
We compute that
$$\frac{\partial D(y,p)}{\partial p}=\frac{1-y}{p\log(p)}\Bigl(\frac{\log(y)}{1-y}-\frac{\log(p)}{1-p}\Bigr);$$
since the function $t\mapsto \frac{\log(t)}{1-t}$ is strictly decreasing on $(0,1)$ (as can be seen by computing its first and second order derivatives), we see that for any $y,p\in(0,1)^2$ we have the inequalities
$$\frac{\partial D(y,p)}{\partial p}>0 \quad \text{if} \quad y>p \quad \text{ and } \quad
\frac{\partial D(y,p)}{\partial p}<0 \quad \text{if} \quad y<p.$$
Using $D(p,p)=0$, it is easy to conclude.
\end{proof}

Now let $\epsilon\in(0,\max(p,1-p))$; then for $n$ large we have
$$
\frac{\PP(\hat{p}^{n,k}-p>\epsilon)}{\PP(\overline{p}_n-p>\epsilon)}=\exp\Bigl(n\Delta(\epsilon,n) \Bigr)\rightarrow 0,
$$
exponentially fast, since we have by the Large Deviations Principles $\Delta(\epsilon,n)\rightarrow \mathcal{I}(p+\epsilon)-I(p+\epsilon)<0$ when $n\rightarrow +\infty$ (notice that both $\mathcal{I}$ and $I$ are increasing on $(p,1)$).

The same arguments apply to get
$$\frac{\PP(\hat{p}^{n,k}-p<-\epsilon)}{\PP(\overline{p}_n-p<-\epsilon)}\rightarrow 0.$$
As a consequence, the probability of observing large deviations from the mean $p$ is much smaller for the AMS algorithm than when using a crude Monte-Carlo estimator, in the large $n$ limit. This statement is a new way of expressing the efficiency of the AMS algorithm.

Notice that in the discussion above we have not assumed that we are estimating a probability in a rare event regime: the conclusion holds for any $p\in(0,1)$. Now it is also instructive to compare $I((1+\epsilon)p)$ and $\mathcal{I}((1+\epsilon)p)$ for a given $\epsilon\in(0,1)$ and when $p\rightarrow 0$: it amounts at looking at deviations of the relative error, and we have
\begin{gather*}
\lim_{n\rightarrow +\infty}\frac{1}{n}\log\Bigl(\PP(\frac{\hat{p}^{n,k}-p}{p}>\epsilon)\Bigr)=-I\bigl(p(1+\epsilon)\bigr)\sim_{p\rightarrow 0} -\frac{\bigl(\log(1+\epsilon)\bigr)^2}{-2\log(p)}\\
\lim_{n\rightarrow +\infty}\frac{1}{n}\log\Bigl(\PP(\frac{\overline{p}_n-p}{p}>\epsilon)\Bigr)=-\mathcal{I}\bigl(p(1+\epsilon)\bigr)\sim_{p\rightarrow 0} -p\bigl((1+\epsilon)\log(1+\epsilon)-\epsilon\bigr).
\end{gather*}
Given $\delta>0$, in order to have a probability lower than $\delta$ that the relative error is larger than $\epsilon$, in the small $p$ limit, one thus needs a number of replicas $n$ which scales like $1/p$ when using the crude Monte-Carlo method, while it scales like $-\log(p)$ (which is much smaller) when using the AMS algorithm. Moreover, since the expected workload is of size $n$ when using the Monte-Carlo method and of size $-n\log(p)$ when using the AMS algorithm, it is clear that in terms of large deviations from the mean the AMS algorithm is more efficient than the crude Monte-Carlo method.

Notice that this discussion is consistent with the conclusions coming from the Central Limit Theorem, where in the regime $p\rightarrow 0$ the asymptotic variance is equivalent to $p$ when using the crude Monte-Carlo method and $-p^2\log(p)$ when using the AMS algorithm: to obtain reliable confidence intervals on the relative error, the number of replicas $n$ scales in the same way.


\subsection{Non-adaptive Multilevel Splitting}

We now compare the rate function $I$ obtained for the Large Deviations Principle on the AMS algorithm, with the one we obtain when using a deterministic (non-adaptive) sequence of levels.

Namely, using Assumption \ref{ass:cdf:c0}, we decompose the probability as a telescoping product of $N\in\N^*$ conditional probabilities
\begin{equation}\label{eq:decompMS}
p=\PP(X>a)=\prod_{i=1}^{N}\PP(X>a_i \big| X>a_{i-1}),
\end{equation}
associated with a given non-decreasing sequence of levels $a_0=0<a_1<\ldots<a_N=a$. We denote by $p^{(i)}=\PP(X>a_i \big| X>a_{i-1})$ the $i$-th conditional probability. The sequence is of size $N$ and we study the asymptotic regime $N\rightarrow +\infty$.

We can define an unbiased estimator of $p$ as follows: let $n\in\N^*$ and set
\begin{equation}
\hat{p}_n^N=\prod_{i=1}^{N}\overline{p}_{n}^{(i)},
\end{equation}
where $\bigl(\overline{p}_{n}^{(i)}\bigr)_{1\leq i\leq N}$ is a family of independent random variables, where each $\overline{p}_{n}^{(i)}$ is a Crude Monte-Carlo estimator (as defined in the section above) for the probability $p^{(i)}$ with $n$ realizations. More precisely, let $\bigl(X_{m}^{(i)}\bigr)_{1\leq m\leq n, 1\leq i\leq N}$ be independent random variables, such that $\mathcal{L}\bigl(X_{m}^{(i)}\bigr)=\mathcal{L}(X | X>a_{i-1})$, and set
\begin{equation}
\overline{p}_{n}^{(i)}=\frac{1}{n}\sum_{m=1}^{n}\mathds{1}_{X_{m}^{(i)}>a_{i}}.
\end{equation}
From a practical point of view, notice that the computation of these estimators requires the sampling of random variables according to the conditional distribution $\mathcal{L}(X | X>a_{i-1})$ for each $i\in\left\{1,\ldots,N\right\}$, just like for the adaptive version.




Here $n$ thus denotes the number of replicas used for the estimation of the probabilities in both the adaptive and the non-adaptive versions. We needed the extra parameter $N$ to denote the number of iterations (\textit{i.e.} the length of the sequence of levels) of the algorithm, while we know that the average number of iterations is of the order $-\frac{n\log(p)}{k}$ in the adaptive case. Therefore, to study the non-adaptive version, we first let $n\rightarrow +\infty$, and then analyze the behavior of the asymptotic quantities with respect to $N$ (in the limit $N\rightarrow +\infty$), while for the adaptive version we need to pass to the limit only once, namely $n\rightarrow +\infty$.

Clearly, by the independence properties of the random variables introduced here we have
$$\E[\hat{p}_{n}^{N}]=p.$$
Moreover, it is well-known that, for a given value of $N$ (the length of the sequence of levels) the asymptotic variance (when $n$ goes to $+\infty$) is minimized when $p^{(i)}=p^{1/N}$ for any $i\in\left\{1,\ldots,N\right\}$ (\text{i.e.} the conditional probabilities in \eqref{eq:decompMS} are equal); moreover the asymptotic variance is a decreasing function of $N$, which converges to $\frac{-p^2\log(p)}{n}$ when $N\rightarrow +\infty$. From a practical point of view, the computation of the associated sequence of levels $a_1,\ldots, a_{N-1}$ is \textit{a priori} difficult: the adaptive version overcomes this issue, and in the regime $N\rightarrow +\infty$ both the non-adaptive and the adaptive version have the same statistical properties.

As a consequence, from now on we assume that $p^{(i)}=p^{1/N}$ for any $i\in\left\{1,\ldots,N\right\}$.

For any $i\in\left\{1,\ldots,N\right\}$, $\bigl(\mathcal{L}(\overline{p}_{n}^{(i)})\bigr)_{n\in\N^*}$ satisfies a Large Deviations Principle with the rate function (see \eqref{eq:def_Ical})
\begin{equation}
\mathcal{I}_N(y)=\begin{cases} +\infty \text{ if } y\notin(0,1) \\ y\log\left(\frac{y}{p^{1/N}}\right)+(1-y)\log\left(\frac{1-y}{1-p^{1/N}}\right) \text{ if } y\in(0,1).\end{cases}
\end{equation}
Since for any $n\in \N^*$ the random variables $\bigl(\overline{p}_{n}^{(i)}\bigr)_{1\leq i\leq N}$ are independent, it is easy to generalize this statement as follows. The sequence $\bigl(\mathcal{L}(\overline{p}_{n}^{(1)},\ldots,\overline{p}_{n}^{(N)})\bigr)_{n\in \N^*}$ satisfies a Large Deviations Principle in $\R^N$ with the rate function (with abuse of notation $\mathcal{I}_N$ refers both to the function depending on a $1$-dimensional or a $N$-dimensional variable)
\begin{equation}
\mathcal{I}_N(y_1,\ldots,y_N)=\sum_{i=1}^{N}\mathcal{I}_N(y_i).
\end{equation}
Now as a consequence of the contraction principle, since $\hat{p}_{n}^{N}=\prod_{i=1}^{N}\overline{p}_{n}^{(i)}$, the sequence $\bigl(\hat{p}_{n}^{N}\bigr)_{n\in\N^*}$ also satisfies a Large Deviations Principle with the rate function
\begin{equation}
I_N(y)=\inf\left\{ \mathcal{I}_N(y_N,\ldots,y_N) ~;~ y=\prod_{i=1}^{N}y_i\right\}.
\end{equation}
On the one hand, it is clear that if $y\notin (0,1)$, then $I_N(y)=+\infty$. Indeed, for any $(y_1,\ldots,y_N)$ satisfying the constraint $y=\prod_{i=1}^{N}y_i\notin(0,1)$, at least one of the $y_i$'s satisfies $y_i\notin (0,1)$, which yields $\mathcal{I}_{N}(y_i)=\mathcal{I}_N(y_1,\ldots,y_n)=+\infty$.

On the other hand, by definition of $I_N$, we have for any $y\in (0,1)$
\begin{align*}
I_N(y)&\le \mathcal{I}_N(y^{1/N},\ldots,y^{1/N})=N\mathcal{I}_N(y^{1/N})\\
&=Ny^{1/N}\log\bigl(\frac{y^{1/N}}{p^{1/N}}\bigr)+N(1-y^{1/N})\log\bigl(\frac{1-y^{1/N}}{1-p^{1/N}}\bigr)\\
&\underset{N \to \infty}\rightarrow \log(y)-\log(p)-\log(y)\log\bigl(\frac{\log(y)}{\log(p)}\bigr)=I(y).
\end{align*}

For our purpose, this inequality is sufficient.

We now interpret the previous inequality in terms of asymptotic estimates for deviations of $\hat{p}_{n}^{N}$ and of $\hat{p}^{n,k}$ with respect to their expected value $p$. Let $\epsilon>0$, then we have by definition of the Large Deviations Principle with rate function $I_N$
\begin{align*}
\liminf_{n\rightarrow +\infty}\frac{1}{n}\log\Bigl(\PP\bigl(\big|\hat{p}_{n}^{N}-p\big|>\epsilon \bigr) \Bigr)&\geq -\inf\left\{I_N(y) ~; |y-p|\geq \epsilon\right\}\\
&\geq -\inf\left\{N\mathcal{I}_N(y^{1/N}) ~;~ |y-p|\geq \epsilon\right\}\\
&\geq -\min\left\{N\mathcal{I}_N((p+\epsilon)^{1/N}),N\mathcal{I}_N((p-\epsilon)^{1/N})\right\},
\end{align*}
using that $\mathcal{I}_N$ is non-increasing on $(-\infty,p^{1/N})$ and non-decreasing on $(p^{1/N},+\infty)$.

To conclude, notice that
\begin{align*}
\lim_{N\rightarrow +\infty}-\min\left\{N\mathcal{I}_N((p+\epsilon)^{1/N}),N\mathcal{I}_N((p-\epsilon)^{1/N})\right\}&=-\min\left\{I(p+\epsilon),I(p-\epsilon)\right\}\\
&=\lim_{n\rightarrow +\infty}\frac{1}{n}\log\Bigl(\PP\bigl(\big|\hat{p}^{n,k}-p\big|>\epsilon \bigr) \Bigr).
\end{align*}

We can thus assess that the Adaptive Multilevel Splitting algorithm is more efficient (in a large sense) than the non-adaptive version in terms of large deviations when the number of replicas $n$ goes to $+\infty$ and in the limit of large number $N$ if levels.

\section{Proof of the technical estimates}\label{sect:detailed_proof}

In this section, we give detailed proofs for the technical auxiliary results used in Section \ref{sect:proof_k}.

\begin{proof}[Proof of Proposition \ref{propo:EDO}]
We proceed by recursion, like in the proof of Proposition $6.4$ in \cite{BLR} and Lemma $2$ in \cite{BGT}. We fix the values of $1\leq k<n$ and of $\lambda\in\R$.

Differentiating recursively with respect to $x$, for any $0\leq l\leq k-1$ and for any $0\leq x\leq a$ we have (for a family of coefficients described by \eqref{eq:recursion} below)
\begin{eqnarray}\label{eq:recur_p}
\frac{d^l}{dx^l}\left(\Gamma_{n,k}(\lambda;x)-\Theta_{n,k}(\lambda;x)\right)&=&\mu_{l}^{n,k}\exp\Bigl(n\lambda\log(1-\frac{k}{n})\Bigr)\int_{x}^{a}\Gamma_{n,k}(\lambda;y)f_{n,k-l}(y;x) dy\nonumber\\
&&+\sum_{m=0}^{l-1}r_{m,l}^{n,k}\frac{d^m}{dx^m}\left(\Gamma_{n,k}(\lambda;x)-\Theta_{n,k}(\lambda;x)\right),
\end{eqnarray}
and that differentiating once more we get
\begin{eqnarray}\label{eq:recur_p_fin}
\frac{d^k}{dx^k}\left(\Gamma_{n,k}(\lambda;x)-\Theta_{n,k}(\lambda;x)\right)&=&\mu^{n,k}\exp\Bigl(n\lambda\log(1-\frac{k}{n})\Bigr)\Gamma_{n,k}(\lambda;x)\nonumber\\
&&+\sum_{m=0}^{k-1}r_{m}^{n,k}\frac{d^m}{dx^m}\left(\Gamma_{n,k}(\lambda;x)-\Theta_{n,k}(\lambda;x)\right),
\end{eqnarray}
with $\mu^{n,k}:=\mu_{k}^{n,k}$ and $r_{m}^{n,k}:=r_{m,k}^{n,k}$. 

The coefficients satisfy
\begin{equation}\label{eq:recursion}
\begin{gathered}
\mu_{0}^{n,k}=1,\mu_{l+1}^{n,k}=-(n-k+l+1)\mu_{l}^{n,k};\\
\begin{cases}
r_{0,l+1}^{n,k}=-(n-k+l+1)r_{0,l}^{n,k}, \quad \text{if } l>0,\\
r_{m,l+1}^{n,k}=r_{m-1,l}^{n,k}-(n-k+l+1)r_{m,l}^{n,k}, \quad 1\leq m\leq l,\\
r_{l,l}^{n,k}=-1.
\end{cases}
\end{gathered}
\end{equation}
Notice that these coefficients do not depend on $\lambda$, and are the same as in \cite{BLR} and \cite{BGT}. Properties \eqref{eq:rec_coeffs} are proved in \cite{BLR}.

Thanks to \eqref{eq:rec_coeffs}, for all $j\in\left\{0,\ldots,k-1\right\}$ and any $x\in[0,a]$ we have
$$\frac{d^k}{dx^k}\exp\left((n-k+j+1)(x-a)\right)=\sum_{m=0}^{k-1}r_{m}^{n,k}\frac{d^m}{dx^m}\exp\left((n-k+j+1)(x-a)\right).$$
Using the expression of $F_{n,k}$, straightforward computations show that $\Theta_{n,k}(\lambda;\cdot)$ is a linear combination of the exponential functions $z\mapsto \exp(-nz),\ldots,\exp(-(n-k+1)z)$; therefore
$$\frac{d^k}{dx^k}\Theta_{n,k}(t,x)=\sum_{m=0}^{k-1}r_{m}^{n,k}\frac{d^m}{dx^m}\Theta_{n,k}(t,x),$$
and thus \eqref{eq:recur_p_fin} gives \eqref{eq:ODE}.

\end{proof}

\begin{proof}[Proof of Lemma \ref{lemme:derivs}]

From \eqref{eq:recur_p}, the equality in \eqref{eq:init_cond} is clear.

We claim that for any $0\leq m\leq k-1$ and any $0\leq \ell \leq k-1$
\begin{equation}\label{eq:proof_derivs}
\frac{d^{m}}{dx^{m}}\Bigl(F_{n,\ell}(a;x)-F_{n,\ell+1}(a;x) \Bigr)\big|_{x=a}\underset{n \to \infty}\sim n^{m}\binom{m}{\ell}(-1)^{\ell}.
\end{equation}
In particular, $\frac{d^{m}}{dx^{m}}\Bigl(F_{n,\ell}(a;x)-F_{n,\ell+1}(a;x) \Bigr)\big|_{x=a}=0=\binom{m}{\ell}$ for $n$ large enough as soon as $\ell>m$.
Conclusion is then straightforward: using the definition \eqref{eq:def_Theta} of $\Theta_{n,k}$, we get
\begin{align*}
\frac{1}{n^m}\frac{d^{m}}{dx^{m}}\Theta_{n,k}(\lambda;x)\big|_{x=a}
&=\frac{1}{n^m}\sum_{\ell=0}^{k-1}\frac{d^{m}}{dx^{m}}\exp\bigl(n\lambda\log(1-\frac{\ell}{n})\bigr)\Bigl(F_{n,\ell}(a;x)-F_{n,\ell+1}(a;x) \Bigr)\big|_{x=a}\\
&\underset{n \to \infty}\rightarrow \sum_{\ell=0}^{m}\binom{m}{\ell}(-1)^{\ell}\exp\bigl(-\ell\lambda\bigr)\\
&=\Bigl(1-\exp\bigl(-\lambda\bigr)\Bigr)^m.
\end{align*}

We now prove \eqref{eq:proof_derivs} by induction on $m$.

We first consider $m=0$. Then for any $\ell\in \N^*$ we have $F_{n,\ell}(a;a)=0$ and $F_{n,0}(a;a)=1$ (by the convention $F_{n,0}(y;x)=\mathds{1}_{y\geq x}$), and \eqref{eq:proof_derivs} holds.

Let us also consider $m=1$, when $k\geq 2$. Then $\frac{d}{dx}F_{n,0}(a;x)\big|_{x=a}=0$, while for any $x\leq a$
$$\frac{d}{dx}F_{n,\ell}(a;x)=\frac{d}{dx}F_{n,\ell}(a-x;0)=-f_{n,\ell}(a-x;0)=-f_{n,\ell}(a;x)$$
as a consequence of the absence of memory property of the exponential distribution. 

Now since $f_{n,\ell}(a,a)=n\mathds{1}_{\ell=1}$, we get \eqref{eq:proof_derivs} for $m=1$.

The induction is based on the following relations (deduced from elementary computations; for a proof see \cite{BLR}, Section $6.3$)
\begin{equation}\label{formula:d/dxk}
\left\{
\begin{array}{l}
\begin{gathered}
\frac{d}{dx}f_{n,1}(y;x)=nf_{n,1}(y;x).\\
\text{ for $\ell \in \{2, \ldots , n-1\}$}, \,\frac{d}{dx}f_{n,\ell}(y;x)=(n-\ell+1)\bigl(f_{n,\ell}(y;x)-f_{n,\ell-1}(y;x)\bigr).
\end{gathered}
\end{array}
\right.
\end{equation}
Thanks to the first formula in \eqref{formula:d/dxk}, we easily get \eqref{eq:proof_derivs} for $\ell=0$ by induction on $m$.

If now $\ell\in\left\{1,\ldots,k-1\right\}$, we have the recursive formula for $m\geq 1$
\begin{eqnarray*}
\frac{d^{m+1}}{dx^{m+1}}\Bigl(F_{n,\ell}(a;x)-F_{n,\ell+1}(a;x)\Bigr)\big|_{x=a}&=&\frac{d^{m}}{dx^{m}}\Bigl(f_{n,\ell+1}(a;x)-f_{n,\ell}(a;x)\Bigr)\big|_{x=a}\\
&=&~(n-\ell)\frac{d^{m-1}}{dx^{m-1}}\Bigl(f_{n,\ell+1}(a;x)-f_{n,\ell}(a;x)\Bigr)\big|_{x=a}\\
&&-(n-\ell+1)\frac{d^{m-1}}{dx^{m-1}}\Bigl(f_{n,\ell}(a;x)-f_{n,\ell-1}(a;x)\Bigr)\big|_{x=a}\\
&=&~(n-\ell)\frac{d^{m}}{dx^{m}}\Bigl(F_{n,\ell}(a;x)-F_{n,\ell+1}(a;x)\Bigr)\big|_{x=a}\\
&&-(n-\ell+1)\frac{d^{m}}{dx^{m}}\Bigl(F_{n,\ell-1}(a;x)-F_{n,\ell}(a;x)\Bigr)\big|_{x=a}
\end{eqnarray*}
Finally using the induction hypothesis and obtain
\begin{align*}
\frac{1}{n^{m+1}}\frac{d^{m+1}}{dx^{m+1}}\Bigl(F_{n,\ell}(a;x)-F_{n,\ell+1}(a;x)\Bigr)\big|_{x=a}&\underset{n\rightarrow +\infty}\rightarrow (-1)^{\ell}\binom{m}{\ell}-(-1)^{\ell-1}\binom{m}{\ell-1}\\
&=(-1)^{\ell}\binom{m+1}{\ell}.
\end{align*}

This concludes the proof of Lemma \ref{lemme:derivs}.

%

%
%
%
%

\end{proof}

\begin{proof}[Proof of Proposition \ref{propo:EDO_resol}]
The $\nu_{n,k}^{\ell}(\lambda)$ are the roots of the caracteristic equation associated with the linear ODE \eqref{eq:ODE}:
$$\frac{(n-\nu)...(n-k+1-\nu)}{n...(n-k+1)}-\exp\Bigl(n\lambda\log(1-\frac{k}{n})\Bigr)=0,$$
which can be rewritten as a polynomial equation of degree $k$ with respect to the variable $\overline{\nu}_n=\frac{\nu}{n}$:
$$\frac{(1-\overline{\nu}_n)...(1-\frac{k-1}{n}-\overline{\nu}_n)}{1...(1-\frac{k-1}{n})}-\exp\Bigl(n\lambda\log(1-\frac{k}{n})\Bigr)=0,$$
where $\exp\Bigl(n\lambda\log(1-\frac{k}{n})\Bigr)\underset{n\rightarrow +\infty} \rightarrow \exp(-k\lambda)$.

By continuity of the roots of polynomials of degree $k$ with respect to the coefficients, we get that for all $\ell\in\left\{1,\ldots,k\right\}$ (with an appropriate ordering of the roots)
$$\frac{\nu_{n,k}^{\ell}(\lambda)}{n}\rightarrow \overline{\nu}_{\infty,k}^{\ell}(\lambda)$$
where $(1-\overline{\nu}_{\infty,k}^{\ell}(\lambda))^k=e^{-k\lambda}$. This identity immediately yields \eqref{eq:roots}.

As a consequence, for $n$ large enough the roots $\nu_{n,k}^{\ell}(\lambda)$ are pairwise distinct. Then \eqref{eq:sol_ODE} holds for some complex numbers $\gamma_{n,k}^{\ell}(\lambda)$, where $\ell\in\left\{1,\ldots,k\right\}$. Thanks to \eqref{eq:sol_ODE} evaluated at $x=a$, these coefficients are solution of the following linear system of equations:
\begin{equation}
\left\{
\begin{array}{l}
\gamma_{n,k}^{1}(\lambda)+ ... + \gamma_{n,k}^{k}(\lambda)=\Gamma_{n,k}(\lambda;x) \big|_{x=a},\\
\gamma_{n,k}^{1}(\lambda)\nu_{n,k}^{1}(\lambda) + ... + \gamma_{n,k}^{k}(\lambda)\nu_{n,k}^{k}(\lambda) =\frac{d}{dx}\Gamma_{n,k}(\lambda;x) \big|_{x=a},\\
\vdots \\
\gamma_{n,k}^{1}(\lambda)\left(\nu_{n,k}^{1}(\lambda)\right)^{k-1} + ... + \gamma_{n,k}^{k}(\lambda)\left(\nu_{n,k}^{k}(\lambda)\right)^{k-1} =\frac{d^{k-1}}{dx^{k-1}}\Gamma_{n,k}(\lambda;x) \big|_{x=a}.
\end{array}
\right.
\end{equation}
This system is equivalent with
\begin{equation}
\left\{
\begin{array}{l}
\gamma_{n,k}^{1}(\lambda)+ ... + \gamma_{n,k}^{k}(\lambda)=\Gamma_{n,k}(\lambda;x) \big|_{x=a}\underset{n\rightarrow +\infty}\rightarrow 1,\\
\gamma_{n,k}^{1}(\lambda)\overline{\nu}_{n,k}^{1}(\lambda) + ... + \gamma_{n,k}^{k}(\lambda)\overline{\nu}_{n,k}^{k}(\lambda) =\frac{1}{n}\frac{d}{dx}\Gamma_{n,k}(\lambda;x) \big|_{x=a}\underset{n\rightarrow +\infty}\rightarrow \overline{\nu}_{\infty,k}^{1}(\lambda),\\
\vdots \\
\gamma_{n,k}^{1}(\lambda)\overline{\nu}_{n,k}^{1}(\lambda)^{k-1} + ... + \gamma_{n,k}^{k}(\lambda)\overline{\nu}_{n,k}^{k}(\lambda)^{k-1} =\frac{1}{n^{k-1}}\frac{d^{k-1}}{dx^{k-1}}\Gamma_{n,k}(\lambda;x) \big|_{x=a}\underset{n\rightarrow +\infty}\rightarrow\overline{\nu}_{\infty,k}^{1}(\lambda)^{k-1},
\end{array}
\right.
\end{equation}
thanks to \eqref{eq:init_cond} and \eqref{eq:roots}, where $\overline{\nu}_{n,k}^{\ell}(\lambda)=\frac{\nu_{n,k}^{\ell}(\lambda)}{n}\underset{n\rightarrow +\infty}\rightarrow \overline{\nu}_{\infty,k}^{\ell}(\lambda)$.

It is now easy to get \eqref{eq:coeffs}, which concludes the proof of Proposition \ref{propo:EDO_resol}.

\end{proof}

\section{Conclusion and perspectives}\label{sect:conclusion}

We have established (Theorem \ref{th:LDP}) a Large Deviations Principle result for the Adaptive Multilevel Splitting ${\rm AMS}(n,k)$ Algorithm in an idealized setting, when the number of replicas $n$ goes to infinity while the parameter $k$ and the threshold $a$ remain fixed. The rate function does not depend on $k$: when $k=1$, the proof is very simple and uses an interpretation of the algorithm with a Poisson process (the number of iterations follows a Poisson distribution). When $k>1$, we rely on a functional equation technique which was already used to prove unbiasedness and asymptotic normality of the estimator in the previous works \cite{BLR} and \cite{BGT}.

We were able to relate the efficiency of the algorithm with this Large Deviations result, with a comparison with two algorithms (see Section~\ref{sect:comp}): a crude Monte-Carlo method and a non-adaptive version. More generally, in other situations Large Deviations could be a powerful tool to compare adaptive or non-adaptive multilevel splitting algorithms, instead of resorting only on comparison of asymptotic variances associated with central limit theorems.

Let us mention a few open directions for future works. First, it should be interesting to look at the regime where $k$ also goes to infinity, with $k/n$ converging to a proportion $\alpha\in(0,1)$. We expect to prove that the optimal rate function is obtained for $\alpha$ decreasing to $0$: indeed, the asymptotic variance is minimized in this regime. A comparison with a non-adaptive version of the algorithm is expected to show that the adaptive algorithm behaves (in terms of large deviations) like the non-adaptive version when the number of replicas and of levels goes to infinity, like in the regime we have studied in this paper.

A severe restriction is given by the so-called idealized setting: we need to know how to sample according to the conditional distribution $\mathcal{L}(X |X>x)$. In practice, and especially when computing crossing probabilities for high dimensional metastable stochastic processes, it is not satisfied and the multilevel splitting algorithm needs to use an importance function to define appropriate levels, and at each step the computation of the new sample uses the one at the previous iteration (thanks to a branching procedure of the successful trajectories). A natural question is whether one can prove a Large Deviations Principle in such a framework, and study quantitatively how the rate function depends on the importance function.

In fact, when using both non-adaptive (see \cite{GarvelsKroeseVan-Ommeren2002}, \cite{GlassermanHeidelbergerShahabuddinZajic1998}) and adaptive (\cite{BGGLR}, in preparation) multilevel splitting algorithms, one may observe a very large difference between the value of the estimator (averaged over a number $M$ of independent realizations) and the true result, or between the results obtained for different choices of the importance function. Even if the estimator of the probability is unbiased, in such situations one observes an \emph{apparent bias} toward smaller values if $M$ is not sufficiently large. This phenomenon is explained by specificity of the models: there are several channels to reach the region $B$ from $A$ (in the case of the estimation of crossing probabilities between metastable states of a Markov process), which may be sampled very differently when the importance function changes. It should be interesting to investigate the relation between this phenomenon and the Large Deviations Principle for the associated estimator.


{\small
\section*{\small Acknowledgments}

The author would like to thank B.~Bercu and A.~Richou for suggesting this work, and F.~C\'erou, A.~Guyader and M.~Rousset for helpful discussions and advice.
}

\bibliographystyle{alpha}

\newcommand{\etalchar}[1]{$^{#1}$}
\def\polhk#1{\setbox0=\hbox{#1}{\ooalign{\hidewidth
  \lower1.5ex\hbox{`}\hidewidth\crcr\unhbox0}}} \def\cprime{$'$}



\end{document}